\newtheorem{theorem}{Theorem}[section]
\newtheorem{lemma}[theorem]{Lemma}
\newtheorem{proposition}{Proposition}
\theoremstyle{definition}
\newtheorem{remark}{Remark}
\newtheorem{condition}[theorem]{Condition} 
\author[Valerii Los and Aleksandr Murach]
{Valerii Los and Aleksandr Murach}
\title[Isomorphism theorems for parabolic problems]
{Isomorphism theorems for some parabolic initial-boundary value problems in H\"ormander spaces}
\address{National Technical University of Ukraine Igor Sikorsky Kyiv Polytechnic Institute,
Prospect Peremohy 37, 03056, Kyiv-56, Ukraine}
\email{v\_los@yahoo.com}
\address{Institute of Mathematics, National Academy of Sciences of Ukraine, 3 Tereshchenkivs'ka, Kyiv, 01004, Ukraine
and Chernihiv National Pedagogical University, Het'mana Polubotka str. 53, 14013 Chernihiv, Ukraine}
\email{murach@imath.kiev.ua}
\subjclass[2000]{Primary 35K35, 46B70; Secondary 46E35.}
\keywords{Parabolic initial-boundary value problem,
H\"ormander space, slowly varying function, isomorphism property,
interpolation with a function parameter.}
\begin{document}

\maketitle

\begin{abstract}
 In H\"ormander inner product spaces, we investigate initial-boundary value problems for an arbitrary second order parabolic partial differential equation and the Dirichlet or a general first-order boundary conditions. We prove that the operators corresponding to these problems are isomorphisms between appropriate H\"ormander spaces. The regularity of the functions which form these spaces is characterized by a pair of number parameters and a function parameter varying regularly at infinity in the sense of Karamata. Owing to this function parameter, the H\"ormander spaces describe the regularity of functions more finely than the anisotropic Sobolev spaces.
 \end{abstract}
\vspace{1cm}

\section{Introduction}\label{8sec1}

The modern theory of general parabolic initial-boundary problems has been
developed for the classical scales of H\"older--Zygmund and Sobolev function
spaces \cite{AgranovichVishik64, Friedman64, LadyzhenskajaSolonnikovUraltzeva67, LionsMagenes72ii, Zhitarashu85, Ivasyshen90, Eidelman94, Lunardi1995, ZhitarashuEidelman98}. The central result of this theory are the theorems on
well-posedness by Hadamard of these problems on appropriate pairs of these
spaces. For applications, especially to the spectral theory
of differential operators, inner product Sobolev spaces play a special role.

In 1963 H\"ormander \cite{Hermander63} proposed a broad and meaningful generalization
of the Sobolev spaces in the framework of Hilbert spaces. He introduced the spaces
$$
\mathcal{B}_{2,\mu}:=\bigl\{ w\in\mathcal{S}'(\mathbb{R}^{k})
:\mu(\xi)\widehat{w}(\xi)\in L_2(\mathbb{R}^{k},\,d\xi)\bigr\},
$$
for which a general Borel measurable weight function $\mu:\mathbb{R}^{k}\rightarrow(0,\infty)$ serves as an index of regularity of a distribution $w$. (Here, $\widehat{w}$ denotes the Fourier transform
of $w$.) These spaces and their versions within the category of normed spaces (so called
spaces of generalized smoothness) have found various applications to analysis and partial differential equations \cite{VolevichPaneah65, Lizorkin86, Paneah00, Jacob010205, Triebel01, FarkasLeopold06, NicolaRodino10, MikhailetsMurach14, MikhailetsMurach15}.

Recently Mikhailets and Murach \cite{MikhailetsMurach06UMJ2, MikhailetsMurach06UMJ3, MikhailetsMurach07UMJ5, Murach07UMJ6, MikhailetsMurach08UMJ4} have built a theory of solvability of general elliptic systems and elliptic boundary-value problems on Hilbert scales of spaces $H^{s;\varphi}:=\mathcal{B}_{2,\mu}$ for which the index of regularity is of the form
$$
\mu(\xi):=(1+|\xi|^{2})^{s/2}\varphi((1+|\xi|^{2})^{1/2}).
$$
Here, $s$ is a real number, and $\varphi$ is a function varying slowly at infinity
in the sense of Karamata \cite{Karamata30a}. This theory is based on the method
of interpolation with a function parameter between Hilbert spaces, specifically
between Sobolev spaces. This allows Mikhailets and Murach to deduce theorems about solvability of elliptic systems and elliptic problems from the known results on the solvability of elliptic equations in Sobolev spaces. This theory is set force in \cite{MikhailetsMurach14, MikhailetsMurach12BJMA2}.

Generally, the method of interpolation between normed spaces proved to be very useful in the theory of elliptic \cite{Berezansky65, LionsMagenes72i, Triebel95} and parabolic \cite{LionsMagenes72ii, Lunardi1995} partial differential equations. Specifically, Lions and Magenes \cite{LionsMagenes72ii} systematically used the interpolation with a number (power) parameter between Hilbert spaces in their theory of solvability of parabolic initial-boundary value problems on a complete scale of anisotropic Sobolev spaces. Using the more flexible method of interpolation with a function parameter between Hilbert spaces, Los, Mikhailets, and Murach \cite{LosMurach13MFAT2, LosMikhailetsMurach17CPAA} proved theorems on solvability of semi-homogeneous parabolic problems in $2b$-anisotropic H\"ormander spaces $H^{s,s/(2b);\varphi}$, where $2b$ is a parabolic weight and where the parameters $s$ and $\varphi$ are the same as those in the above mentioned elliptic theory. These problems were considered in the case of homogeneous initial conditions (Cauchy data).

The purpose of this paper is to establish the well-posedness of inhomogeneous parabolic problems on appropriate pairs of the H\"ormander spaces, i.e. to prove new isomorphism theorems for these problems. We consider the problems that consist of a general second order parabolic partial differential equation, the Dirichlet boundary condition or a general first order boundary condition, and the Cauchy datum. We deduce these isomorphism theorems from Lions and Magenes' result \cite{LionsMagenes72ii} with the help of the interpolation with a function parameter between anisotropic Sobolev spaces. The use of this method in the case of inhomogeneous parabolic problems meets additional difficulties connected with the necessity to take into account quite complex compatibility conditions imposed on the right-hand sides of the problem. The model case of initial boundary-value problems for heat equation is investigated in \cite{Los15UMG5}.

\section{Statement of the problem}\label{8sec2}

We arbitrarily choose an integer $n\geq2$ and a real number $\tau>0$. Let $G$ be a bounded domain in $\mathbb{R}^{n}$ with an infinitely smooth boundary $\Gamma:=\partial G$. We put
$\Omega:=G\times(0,\tau)$ and $S:=\Gamma\times(0,\tau)$; so, $\Omega$ is an open cylinder in $\mathbb{R}^{n+1}$, and $S$ is its lateral boundary. Then $\overline{\Omega}:=\overline{G}\times[0,\tau]$ and
$\overline{S}:=\Gamma\times[0,\tau]$ are the closures of $\Omega$ and $S$ respectively.

In $\Omega$, we consider a parabolic second order partial differential equation
\begin{equation}\label{8f1}
\begin{gathered}
Au(x,t)\equiv\partial_{t}u(x,t)+
\sum_{|\alpha|\leq2}a_{\alpha}(x,t)\,D^\alpha_x
u(x,t)=f(x,t)\\
\mbox{for all}\;\;x\in G\;\;\mbox{and}\;\;t\in(0,\tau).
\end{gathered}
\end{equation}
Here and below, we use the following notation for partial derivatives: $\partial_t:=\partial/\partial t$ and $D^\alpha_x:=D^{\alpha_1}_{1}\dots D^{\alpha_n}_{n}$, where $D_{j}:=i\,\partial/\partial{x_j}$,  $x=(x_1,\ldots,x_n)\in\mathbb{R}^{n}$, and $\alpha:=(\alpha_1,\ldots,\alpha_n)$ with $0\leq\alpha_1,...,\alpha_n\in\mathbb{Z}$ and $|\alpha|:=\alpha_1+\cdots+\alpha_n$. We suppose that all the coefficients $a_{\alpha}$ of $A$ belong to the space $C^{\infty}(\overline{\Omega})$. In the paper, all functions and distributions are supposed to be complex-valued, so we consider complex function spaces.

We suppose that the partial differential operator $A$ is Petrovskii parabolic on $\overline{\Omega}$, i.e. it satisfies the following condition (see, e.g. \cite[Section~9, Subsection~1]{AgranovichVishik64}):

\begin{condition}\label{8cond1}
For arbitrary $x\in\overline{G}$, $t\in[0,\tau]$, $\xi=(\xi_{1},\ldots,\xi_{n})\in\mathbb{R}^{n}$, and $p\in\mathbb{C}$ with $\mathrm{Re}\,p\geq0$, the inequality
\begin{equation*}
p+\sum_{|\alpha|=2} a_{\alpha}(x,t)\,\xi_{1}^{\alpha_{1}}\cdots\xi_{n}^{\alpha_{n}}
\neq0\quad\mbox{holds whenever}\quad|\xi|+|p|\neq0.
\end{equation*}
\end{condition}

In the paper, we investigate the initial-boundary value problem that consists of the parabolic equation~\eqref{8f1}, the initial condition
\begin{equation}\label{8f3}
u(x,0)=h(x)\quad\mbox{for all}\;\;x\in G,
\end{equation}
and the zero-order (Dirichlet) boundary condition
\begin{equation}\label{8f2}
u(x,t)=g(x,t)
\quad\mbox{for all}\;\;x\in\Gamma\;\;\mbox{and}\;\;t\in(0,\tau)
\end{equation}
or the first order boundary condition
\begin{equation}\label{8f2n}
\begin{gathered}
Bu(x,t)\equiv\sum_{j=1}^{n}b_j(x,t)D_ju(x,t)+b_0(x,t)u(x,t)=g(x,t)\\
\mbox{for all}\;\;x\in\Gamma\;\;\mbox{and}\;\;t\in(0,\tau).
\end{gathered}
\end{equation}

As to \eqref{8f2n}, we assume that all the coefficients $b_0$, $b_1$, ..., $b_n$ of $B$ belong to $C^{\infty}(\overline{S})$ and that
$B$ covers $A$ on $\overline{S}$ \cite[Section~9, Subsection~1]{AgranovichVishik64}. The latter assumption means the fulfilment of the following:

\begin{condition}\label{8cond2}
Choose arbitrarily $x\in\Gamma$, $t\in[0,\tau]$, vector
$\eta=(\eta_1,\dots,\eta_n)\in\mathbb{R}^{n}$ tangent to the boundary $\Gamma$ at the point $x$, and number $p\in\mathbb{C}$ with $\mathrm{Re}\,p\geq0$ so that $|\eta|+|p|\neq0$. Let $\nu(x)=(\nu_1(x),\dots,\nu_n(x))$ be the unit vector of the inward normal to $\Gamma$ at $x$. Then:
\begin{itemize}
\item[a)] the inequality
$
\sum_{j=1}^{n}b_j(x,t)\nu_j(x)\neq0
$
holds true;
\item[b)] the number
$$
\zeta=-\biggl(\,\sum\limits_{j=1}^{n}b_j(x,t)\eta_j\biggr)
\biggl(\,\sum\limits_{j=1}^{n}b_j(x,t)\nu_j(x)\biggr)^{-1}
$$
is not a root of the polynomial
$$
p+\sum_{|\alpha|=2} a_{\alpha}(x,t)\,(\eta_{1}+\zeta\nu_{1}(x))^{\alpha_{1}}\cdots
(\eta_{n}+\zeta\nu_{n}(x))^{\alpha_{n}}\quad\mbox{of}\;\;
\zeta\in\mathbb{C}.
$$
\end{itemize}
\end{condition}

It is useful to note that if all the coefficients $b_1$,..., $b_n$ are real-valued, then part~b) of Condition~\ref{8cond2} is satisfied. This follows directly from Condition~\ref{8cond1}.

Thus, we examine both the parabolic problem \eqref{8f1}, \eqref{8f3}, \eqref{8f2} and the parabolic problem \eqref{8f1}, \eqref{8f3}, \eqref{8f2n}. We investigate them in appropriate H\"ormander inner product spaces considered in the next section.

\section{H\"ormander spaces}\label{8sec3}

Among the normed function spaces $\mathcal{B}_{p,\mu}$ introduced by H\"ormander in \cite[Section~2.2]{Hermander63}, we use the inner product spaces $H^{\mu}(\mathbb{R}^{k}):=\mathcal{B}_{2,\mu}$ defined over $\mathbb{R}^{k}$, with $1\leq k\in\mathbb{Z}$. Here,  $\nobreak{\mu:\mathbb{R}^{k}\rightarrow(0,\infty)}$ is an arbitrary Borel measurable function that satisfies the following condition: there exist positive numbers $c$ and $l$ such that
$$
\frac{\mu(\xi)}{\mu(\eta)}\leq
c\,(1+|\xi-\eta|)^{l}\quad\mbox{for all}\quad \xi,\eta\in\mathbb{R}^{k}.
$$

By definition, the (complex) linear space $H^{\mu}(\mathbb{R}^{k})$ consists of all tempered distributions $w\in\mathcal{S}'(\mathbb{R}^{k})$ whose Fourier transform $\widehat{w}$ is a locally Lebesgue integrable function subject to the condition
\begin{equation*}
\int\limits_{\mathbb{R}^{k}}\mu^{2}(\xi)\,|\widehat{w}(\xi)|^{2}\,d\xi
<\infty.
\end{equation*}
The inner product in $H^{\mu}(\mathbb{R}^{k})$ is defined by the formula
\begin{equation*}
(w_1,w_2)_{H^{\mu}(\mathbb{R}^{k})}=
\int\limits_{\mathbb{R}^{k}}\mu^{2}(\xi)\,\widehat{w_1}(\xi)\,
\overline{\widehat{w_2}(\xi)}\,d\xi,
\end{equation*}
where $w_1,w_2\in H^{\mu}(\mathbb{R}^{k})$. This inner product induces the norm
$$
\|w\|_{H^{\mu}(\mathbb{R}^{k})}:=(w,w)^{1/2}_
{H^{\mu}(\mathbb{R}^{k})}.
$$
According to \cite[Section~2.2]{Hermander63}, the space $H^{\mu}(\mathbb{R}^{k})$ is Hilbert and separable with respect to this inner product. Besides that, this space is continuously embedded in the linear topological space $\mathcal{S}'(\mathbb{R}^{k})$ of tempered distributions on $\mathbb{R}^{k}$, and the set $C^{\infty}_{0}(\mathbb{R}^{k})$ of test functions on $\mathbb{R}^{k}$ is dense in $H^{\mu}(\mathbb{R}^{k})$ (see also H\"ormander's monograph \cite[Section~10.1]{Hermander83}). We will say that the function parameter $\mu$ is the regularity index for the space $H^{\mu}(\mathbb{R}^{k})$ and its versions $H^{\mu}(\cdot)$.

A version of $H^{\mu}(\mathbb{R}^{k})$ for an
arbitrary nonempty open set $V\subset\mathbb{R}^{k}$ is introduced in the standard way. Namely,
\begin{gather}\notag
H^{\mu}(V):=\bigl\{w\!\upharpoonright\!V:\,
w\in H^{\mu}(\mathbb{R}^{k})\bigr\},\\
\|u\|_{H^{\mu}(V)}:= \inf\bigl\{\|w\|_{H^{\mu}(\mathbb{R}^{k})}:\,w\in
H^{\mu}(\mathbb{R}^{k}),\;u=w\!\upharpoonright\!V\bigr\}, \label{8f40}
\end{gather}
where $u\in H^{\mu}(V)$. Here, as usual, $w\!\upharpoonright\!V$ stands for the restriction of the distribution $w\in H^{\mu}(\mathbb{R}^{k})$ to the open set~$V$. In other words, $H^{\mu}(V)$ is the factor space of the space $H^{\mu}(\mathbb{R}^{k})$ by its subspace
\begin{equation}\label{8f41}
H^{\mu}_{Q}(\mathbb{R}^{k}):=\bigl\{w\in
H^{\mu}(\mathbb{R}^{k}):\, \mathrm{supp}\,w\subseteq Q\bigr\} \quad\mbox{with}\;\;Q:=\mathbb{R}^{k}\backslash V.
\end{equation}
Thus, $H^{\mu}(V)$ is a separable Hilbert space.
The norm \eqref{8f40} is induced by the inner product
$$
(u_{1},u_{2})_{H^{\mu}(V)}:= (w_{1}-\Upsilon
w_{1},w_{2}-\Upsilon w_{2})_{H^{\mu}(\mathbb{R}^{k})},
$$
where $w_{j}\in H^{\mu}(\mathbb{R}^{k})$, $w_{j}=u_{j}$ in $V$
for each $j\in\{1,\,2\}$, and $\Upsilon$ is the orthogonal projector of the space $H^{\mu}(\mathbb{R}^{k})$ onto its subspace \eqref{8f41}. The spaces $H^{\mu}(V)$ and $H^{\mu}_{Q}(\mathbb{R}^{k})$ were introduced and investigated by Volevich and Paneah
\cite[Section~3]{VolevichPaneah65}.

It follows directly from the definition of $H^{\mu}(V)$ and properties of $H^{\mu}(\mathbb{R}^{k})$ that the space $H^{\mu}(V)$ is continuously embedded in the linear topological space $\mathcal{D}'(V)$ of all distributions on $V$ and that the set
$$
C^{\infty}_{0}(\overline{V}):=\bigl\{w\!\upharpoonright\!\overline{V}:\, w\in C^{\infty}_{0}(\mathbb{R}^{k})\bigr\}
$$
is dense in $H^{\mu}(V)$.

Suppose that the integer $k\geq2$. Dealing with the above-stated parabolic problems, we need the H\"ormander spaces $H^{\mu}(\mathbb{R}^{k})$ and their versions in the case where the regularity index $\mu$ takes the form
\begin{equation}\label{8f4}
\begin{gathered}
\mu(\xi',\xi_{k})=\bigl(1+|\xi'|^2+|\xi_{k}|\bigr)^{s/2}
\varphi\bigl((1+|\xi'|^2+|\xi_{k}|)^{1/2}\bigr)\\
\mbox{for all}\;\;\xi'\in\mathbb{R}^{k-1}\;\;\mbox{and}\;\;
\xi_{k}\in\mathbb{R}.
\end{gathered}
\end{equation}
Here, the number parameter $s$ is real, whereas the function parameter $\varphi$ runs over a certain class~$\mathcal{M}$.

By definition, the class $\mathcal{M}$ consists of all Borel measurable functions $\varphi:[1,\infty)\rightarrow(0,\infty)$ such that
\begin{itemize}
  \item [a)] both the functions $\varphi$ and $1/\varphi$ are bounded on each compact interval $[1,b]$, with $1<b<\infty$;
  \item [b)] the function $\varphi$ varies slowly at infinity in the sense of Karamata \cite{Karamata30a}, i.e.
      $\varphi(\lambda r)/\varphi(r)\rightarrow\nobreak1$ as $r\rightarrow\infty$ for each $\lambda>0$.
\end{itemize}

The theory of slowly varying functions (at infinity) is expounded, e.g., in \cite{BinghamGoldieTeugels89, Seneta76}. Their standard  examples are the functions
\begin{equation*}
\varphi(r):=(\log r)^{\theta_{1}}\,(\log\log r)^{\theta_{2}} \ldots
(\,\underbrace{\log\ldots\log}_{k\;\mbox{\tiny{times}}}r\,)^{\theta_{k}}
\quad\mbox{of}\;\;r\gg1,
\end{equation*}
where the parameters $k\in\mathbb{N}$ and
$\theta_{1},\theta_{2},\ldots,\theta_{k}\in\mathbb{R}$ are arbitrary.

Let $s\in\mathbb{R}$ and $\varphi\in\mathcal{M}$. We put $H^{s,s/2;\varphi}(\mathbb{R}^{k}):=H^{\mu}(\mathbb{R}^{k})$ in the case where $\mu$ is of the form~\eqref{8f4}. Specifically, if $\varphi(r)\equiv1$, then $H^{s,s/2;\varphi}(\mathbb{R}^{k})$ becomes the anisotropic Sobolev inner product space $H^{s,s/2}(\mathbb{R}^{k})$ of order $(s,s/2)$. Generally, if $\varphi\in\mathcal{M}$ is arbitrary, then the following continuous and dense embeddings hold:
\begin{equation}\label{8f5}
H^{s_{1},s_{1}/2}(\mathbb{R}^{k})\hookrightarrow
H^{s,s/2;\varphi}(\mathbb{R}^{k})\hookrightarrow
H^{s_{0},s_{0}/2}(\mathbb{R}^{k})\quad\mbox{whenever}\quad s_{0}<s<s_{1}.
\end{equation}
Indeed, let $s_{0}<s<s_{1}$; since $\varphi\in\mathcal{M}$, there exist positive numbers $c_0$ and $c_1$ such that $c_0\,r^{s_0-s}\leq\varphi(r)\leq c_1\,r^{s_1-s}$ for every $r\geq1$ (see e.g., \cite[Section 1.5, Property $1^\circ$]{Seneta76}).
Then
\begin{align*}
c_{0}\bigl(1+|\xi'|^2+|\xi_{k}|\bigr)^{s_{0}/2}&\leq
\bigl(1+|\xi'|^2+|\xi_{k}|\bigr)^{s/2}
\varphi\bigl((1+|\xi'|^2+|\xi_{k}|)^{1/2}\bigr)\\
&\leq c_{1}\bigl(1+|\xi'|^2+|\xi_{k}|\bigr)^{s_{1}/2}
\end{align*}
for arbitrary $\xi'\in\mathbb{R}^{k-1}$ and $\xi_{k}\in\mathbb{R}$.
This directly entails the continuous embeddings~\eqref{8f5}.
They are dense because the set $C^{\infty}_{0}(\mathbb{R}^{k})$
is dense in all the spaces from \eqref{8f5}.

Consider the class of H\"ormander inner product spaces
\begin{equation}\label{8f6}
\bigl\{H^{s,s/2;\varphi}(\mathbb{R}^{k}):\,
s\in\mathbb{R},\,\varphi\in\mathcal{M}\,\bigr\}.
\end{equation}
The embeddings \eqref{8f5} show, that in \eqref{8f6} the function parameter $\varphi$ defines additional regularity with respect to the  basic anisotropic $(s,s/2)$-regularity. Specifically, if $\varphi(r)\rightarrow\infty$ [or $\varphi(r)\rightarrow\nobreak0$] as $r\rightarrow\infty$, then $\varphi$ defines additional positive [or negative] regularity. In other words, $\varphi$ refines the basic smoothness $(s,s/2)$.

We need versions of the function spaces \eqref{8f6} for the cylinder $\Omega=G\times(0,\tau)$ and its lateral boundary  $S=\Gamma\times(0,\tau)$. We put $H^{s,s/2;\varphi}(\Omega):=H^{\mu}(\Omega)$ in the case where $\mu$ is of the form~\eqref{8f4} with $k:=n+1$. For the function space $H^{s,s/2;\varphi}(\Omega)$, the numbers $s$ and $s/2$ serve as the regularity indices of distributions $u(x,t)$ with respect to the spatial variable $x\in G$ and to the time variable $t\in(0,\tau)$ respectively.

Following \cite[Section~1]{Los16JMathSci}, we will define the function space $H^{s,s/2;\varphi}(S)$ with the help of special local charts on~$S$. Let $s>0$ and $\varphi\in\mathcal{M}$. We put $H^{s,s/2;\varphi}(\Pi):=H^{\mu}(\Pi)$ for the strip $\Pi:=\mathbb{R}^{n-1}\times(0,\tau)$ in the case where $\mu$ is defined by formula \eqref{8f4} with $k:=n$. Recall that, according to our assumption $\Gamma=\partial\Omega$ is an infinitely smooth closed manifold of dimension $n-1$, the $C^{\infty}$-structure on $\Gamma$ being induced by $\mathbb{R}^{n}$. From this structure we arbitrarily choose a finite atlas formed by local charts $\nobreak{\theta_{j}:\mathbb{R}^{n-1}\leftrightarrow \Gamma_{j}}$ with $j=1,\ldots,\lambda$. Here, the open sets $\Gamma_{1},\ldots,\Gamma_{\lambda}$
make up a covering of~$\Gamma$. We also arbitrarily choose  functions $\chi_{j}\in C^{\infty}(\Gamma)$, with $j=1,\ldots,\lambda$, so that $\mathrm{supp}\,\chi_{j}\subset\Gamma_{j}$ and $\chi_{1}+\cdots\chi_{\lambda}=1$ on $\Gamma$.

By definition, the linear space $H^{s,s/2;\varphi}(S)$ consists of all
square integrable functions $\nobreak{g:S\to\mathbb{C}}$ that the function
$$
g_{j}(x,t):=\chi_{j}(\theta_{j}(x))\,g(\theta_{j}(x),t)
\quad\mbox{of}\;\;x\in\mathbb{R}^{n-1}\;\;\mbox{and}\;\;t\in(0,\tau)
$$
belongs to $H^{s,s/2;\varphi}(\Pi)$ for each number
$j\in\{1,\ldots,\lambda\}$. The inner product in $H^{s,s/2;\varphi}(S)$ is defined by the formula
\begin{equation*}
(g,g')_{H^{s,s/2;\varphi}(S)}:=\sum_{j=1}^{\lambda}\,
(g_{j},g'_{j})_{H^{s,s/2;\varphi}(\Pi)},
\end{equation*}
where $g,g'\in H^{s,s/2;\varphi}(S)$. This inner product naturally induces the norm
$$
\|g\|_{H^{s,s/2;\varphi}(S)}:=(g,g)^{1/2}_{H^{s,s/2;\varphi}(S)}.
$$
The space $H^{s,s/2;\varphi}(S)$ is complete (i.~e. Hilbert) and does not depend up to equivalence of norms on the choice of local charts and partition of unity on $\Gamma$ \cite[Theorem~1]{Los16JMathSci}. Note that this space is actually defined with the help of the following special local charts on $S$:
\begin{equation}\label{8f-local}
\theta_{j}^*:\Pi=\mathbb{R}^{n-1}\times(0,\tau)\leftrightarrow
\Gamma_{j}\times(0,\tau),\quad j=1,\ldots,\lambda,
\end{equation}
where $\theta_{j}^*(x,t):=(\theta_{j}(x),t)$ for all
$x\in\mathbb{R}^{n-1}$ and $t\in(0,\tau)$.

We also need isotropic H\"ormander spaces $H^{s;\varphi}(V)$ over an arbitrary open nonempty set $V\subseteq\mathbb{R}^{k}$ with $k\geq1$. Let $s\in\mathbb{R}$ and $\varphi\in\mathcal{M}$. We put $H^{s;\varphi}(V):=H^{\mu}(V)$ in the case where the regularity index $\mu$ takes the form
\begin{equation}\label{8f50}
\mu(\xi)=\bigl(1+|\xi|^2\bigr)^{s/2}\varphi\bigl((1+|\xi|^2)^{1/2}\bigr)
\quad\mbox{for arbitrary}\;\;\xi\in\mathbb{R}^{k}.
\end{equation}
Since the function \eqref{8f50} is radial (i.e., depends only on
$|\xi|$), the space $H^{s;\varphi}(V)$ is isotropic.
We will use the spaces $H^{s;\varphi}(V)$ given over the whole Euclidean space $V:=\mathbb{R}^{k}$ or over the domain $V:=G$ in $\mathbb{R}^{n}$.

Besides, we will use H\"ormander spaces $H^{s;\varphi}(\Gamma)$ over $\Gamma=\partial\Omega$. The are defined with the help of the above-mentioned collection of local charts $\{\theta_{j}\}$ and partition of unity $\{\chi_{j}\}$ on $\Gamma$ similarly to the spaces over $S$. Let $s\in\mathbb{R}$ and $\varphi\in\mathcal{M}$. By definition, the linear space  $H^{s;\varphi}(\Gamma)$ consists of all distributions $\omega\in\mathcal{D}'(\Gamma)$ on $\Gamma$ that for each number $j\in\{1,\ldots,\lambda\}$ the distribution
$\omega_{j}(x):=\chi_{j}(\theta_{j}(x))\,\omega(\theta_{j}(x))$ of
$x\in\mathbb{R}^{n-1}$ belongs to $H^{s;\varphi}(\mathbb{R}^{n-1})$.
The inner product in $H^{s;\varphi}(\Gamma)$ is defined by the formula
\begin{equation*}
(\omega,\omega')_{H^{s;\varphi}(\Gamma)}:=
\sum_{j=1}^{\lambda}\,
(\omega_{j},\omega'_{j})_{H^{s;\varphi}(\mathbb{R}^{n-1})},
\end{equation*}
where $\omega,\omega'\in H^{s;\varphi}(\Gamma)$. It induces the norm
$$
\|\omega\|_{H^{s;\varphi}(\Gamma)}:=
(\omega,\omega)^{1/2}_{H^{s;\varphi}(\Gamma)}.
$$
The space $H^{s;\varphi}(\Gamma)$ is Hilbert separable and does not depend up to equivalence of norms on our choice of local charts and partition of unity on $\Gamma$ \cite[Theorem 3.6(i)]{MikhailetsMurach08MFAT1}.

Note that the classes of isotropic inner product spaces
\begin{equation*}
\bigl\{H^{s;\varphi}(V):s\in\mathbb{R},\;\varphi\in\mathcal{M}\bigr\}
\quad\mbox{and}\quad
\bigl\{H^{s;\varphi}(\Gamma):s\in\mathbb{R},\;\varphi\in\mathcal{M}\bigr\}
\end{equation*}
were selected, investigated, and systematically applied to elliptic differential operators and elliptic boundary-value problems by Mikhailets and Murach \cite{MikhailetsMurach14, MikhailetsMurach12BJMA2}.

If $\varphi\equiv1$, then the considered spaces $H^{s,s/2;\varphi}(\cdot)$ and $H^{s;\varphi}(\cdot)$ become the Sobolev spaces $H^{s,s/2}(\cdot)$ and $H^{s}(\cdot)$ respectively. It follows directly from \eqref{8f5} that
\begin{equation}\label{8f5a}
H^{s_{1},s_{1}/2}(\cdot)\hookrightarrow
H^{s,s/2;\varphi}(\cdot)\hookrightarrow
H^{s_{0},s_{0}/2}(\cdot)\quad\mbox{whenever}\quad s_{0}<s<s_{1}.
\end{equation}
Analogously,
\begin{equation}\label{8f5b}
H^{s_{1}}(\cdot)\hookrightarrow
H^{s;\varphi}(\cdot)\hookrightarrow
H^{s_{0}}(\cdot)\quad\mbox{whenever}\quad s_{0}<s<s_{1};
\end{equation}
see \cite[Theorems 2.3(iii) and 3.3(iii)]{MikhailetsMurach14}. These embeddings are continuous and dense. Of course, if $s=0$, then $H^{s}(\cdot)=H^{s,s/2}(\cdot)$ is the Hilbert space $L_2(\cdot)$ of all square integrable functions given on the corresponding measurable set.

In the Sobolev case of $\varphi\equiv1$, we will omit the index $\varphi$ in designations of function spaces that will be introduced on the base of the H\"ormander spaces $H^{s,s/2;\varphi}(\cdot)$ and $H^{s;\varphi}(\cdot)$.

\section{Main results}\label{8sec4}

Consider first the parabolic problem \eqref{8f1}--\eqref{8f2}, which corresponds to the Dirichlet boundary condition on~$S$. In order that a regular enough solution $u$ to this problem exist, the right-hand sides of the problem should satisfy certain compatibility conditions (see, e.g., \cite[Section~11]{AgranovichVishik64} or \cite[Chapter~4, Section~5]{LadyzhenskajaSolonnikovUraltzeva67}). These conditions consist in that the partial derivatives $\partial^k_t u(x,t)\big|_{t=0}$, which could be found from the
parabolic equation \eqref{8f1} and initial condition \eqref{8f3}, should satisfy the boundary condition \eqref{8f2} and some relations that are obtained by means of the differentiation of the boundary condition with respect to~$t$. To write these compatibility conditions we use Sobolev inner product spaces.

We associate the linear mapping
\begin{gather}\label{8f7}
\Lambda_0:\,u\mapsto\bigl(Au,u\!\upharpoonright\!\overline{S}, u(\cdot,0)\bigr),\quad \mbox{where}\quad u\in
C^{\infty}(\overline{\Omega}),
\end{gather}
with the problem \eqref{8f1}--\eqref{8f2}. Let real $s\geq2$; the mapping \eqref{8f7} extends uniquely (by continuity) to a bounded linear operator
\begin{equation}\label{8f7a}
\Lambda_0:\,H^{s,s/2}(\Omega)\rightarrow
H^{s-2,s/2-1}(\Omega)\oplus
H^{s-1/2,s/2-1/4}(S)\oplus H^{s-1}(G).
\end{equation}
This follows directly from \cite[Chapter~I, Lemma~4, and Chapter~II, Theorems 3 and 7]{Slobodetskii58}. Choosing any function $u(x,t)$ from the space $H^{s,s/2}(\Omega)$, we define the right-hand sides
\begin{equation}\label{8f7b}
f\in H^{s-2,s/2-1}(\Omega),\quad g\in H^{s-1/2,s/2-1/4}(S),\quad
\mbox{and}\quad h\in H^{s-1}(G)
\end{equation}
of the problem by the formula $(f,g,h):=\Lambda_0u$ with the help of this bounded operator.

According to \cite[Chapter~II, Theorem 7]{Slobodetskii58}, the traces
$\partial^{\,k}_t u(\cdot,0)\in H^{s-1-2k}(G)$ are well defined by closure for all $k\in\mathbb{Z}$ such that $0\leq k<s/2-1/2$ (and only for these $k$). Using \eqref{8f1} and \eqref{8f3}, we express these traces in terms of the functions $f(x,t)$ and $h(x)$ by the recurrent formula
\begin{equation}\label{8f9}
\begin{aligned}
u(x,0)&=h(x),\\
\partial^{k}_t u(x,0)&=
-\sum\limits_{|\alpha|\leq2}\sum\limits_{q=0}^{k-1}
\binom{k-1}{q}\partial^{k-1-q}_t a_{\alpha}(x,0)\,
D^\alpha_x\partial^{q}_t u(x,0)
+\partial^{k-1}_t f(x,0)\\
&\mbox{for each}\quad k\in\mathbb{Z}\quad\mbox{such that}\quad
1\leq k<s/2-1/2,
\end{aligned}
\end{equation}
the equalities holding for almost all $x\in G$.

Besides, the traces $\partial^{k}_t g(\cdot,0)\in H^{s-3/2-2k}(\Gamma)$ are well defined by closure for all $k\in\mathbb{Z}$ such that $0\leq k<s/2-3/4$ (and only for these $k$). Therefore, owing to the Dirichlet boundary condition \eqref{8f2}, the equality
\begin{equation}\label{8f68}
\partial^{k}_t g(x,0)=\partial^{k}_t u(x,0)
\quad\mbox{for almost all}\quad x\in\Gamma
\end{equation}
holds for these integers $k$. The right-hand part of this equality is well defined because the function $\partial^{k}_t u(\cdot,0)\in H^{s-1-2k}(G)$ has the trace $\partial^{k}_t u(\cdot,0)\!\upharpoonright\!\Gamma\in H^{s-3/2-2k}(\Gamma)$ in view of $s-3/2-2k>0$.

Now, substituting \eqref{8f9} into \eqref{8f68}, we obtain the compatibility conditions
\begin{equation}\label{8f10}
\partial^{k}_t g\!\upharpoonright\!\Gamma=v_k\!\upharpoonright\!\Gamma,
\quad\mbox{with}\;\;\;k\in\mathbb{Z}\;\;\;\mbox{and}\;\;\;
0\leq k<s/2-3/4.
\end{equation}
Here, the functions $v_k$ are defined by the recurrent formula
\begin{equation}\label{8f69}
\begin{aligned}
v_0(x)&:=h(x),\\
v_k(x)&:=-\sum\limits_{|\alpha|\leq2}\sum\limits_{q=0}^{k-1}
\binom{k-1}{q}\partial^{k-1-q}_t a_{\alpha}(x,0)\,
D^\alpha_x v_{q}(x)+\partial^{k-1}_t f(x,0)\\
&\mbox{for each}\quad k\in\mathbb{Z}\quad\mbox{such that}\quad
1\leq k<s/2-1/2,
\end{aligned}
\end{equation}
these relations holding for almost all $x\in G$. Since
\begin{equation}\label{8f69aa}
v_k\in H^{s-1-2k}(G)\quad\mbox{for each}\quad
k\in\mathbb{Z}\cap[0,s/2-1/2)
\end{equation}
due to \eqref{8f7b}, the trace $v_k\!\upharpoonright\!\Gamma\in H^{s-3/2-2k}(\Gamma)$ is defined by closure whenever $s-3/2-2k>0$. Thus, the compatibility conditions \eqref{8f10} are well posed.

For instance, if $2<s\leq7/2$, then formula \eqref{8f10} gives  one compatibility condition $\nobreak{g\!\upharpoonright\!\Gamma=h\!\upharpoonright\!\Gamma}$. Next, if $7/2<s\leq11/2$, then \eqref{8f10} gives two compatibility conditions
$g\!\upharpoonright\!\Gamma=h\!\upharpoonright\!\Gamma$ and
$$
\partial_t g\!\upharpoonright\!\Gamma=
\biggl(-\sum\limits_{|\alpha|\leq2}a_{\alpha}(x,0)\,
D^\alpha_{x}h(x)+f(x,0)\biggr)\!\upharpoonright\!\Gamma,
$$
and so on.

We put $E_{0}:=\{2r+3/2:1\leq r\in\mathbb{Z}\}$. Note that $E_{0}$ is the set of all discontinuities of the function that assigns the number of compatibility conditions \eqref{8f10} to $s\geq2$.

Our main result on the parabolic problem \eqref{8f1}--\eqref{8f2} consists in that the linear mapping \eqref{8f7} extends uniquely  to an isomorphism between appropriate pairs of H\"ormander spaces introduced in the previous section. Let us indicate these spaces. We arbitrarily choose a real number $s>2$ and function parameter $\varphi\in\mathcal{M}$. We take $H^{s,s/2;\varphi}(\Omega)$ as the source space of this isomorphism; otherwise speaking, $H^{s,s/2;\varphi}(\Omega)$ serves as a space of solutions $u$ to the problem. To introduce the target space of the isomorphism, consider the Hilbert space
\begin{gather*}
\mathcal{H}_{0}^{s-2,s/2-1;\varphi}:=
H^{s-2,s/2-1;\varphi}(\Omega)\oplus
H^{s-1/2,s/2-1/4;\varphi}(S)\oplus H^{s-1;\varphi}(G).
\end{gather*}
In the Sobolev case of $\varphi\equiv1$ this space coincides with the target space of the bounded operator \eqref{8f7a}. The target space of the isomorphism is imbedded in $\mathcal{H}_{0}^{s-2,s/2-1;\varphi}$ and is denoted by  $\mathcal{Q}_{0}^{s-2,s/2-1;\varphi}$. We separately define this space in the $s\notin E_{0}$ case and $s\in E_{0}$ case.

Suppose first that $s\notin E_{0}$. By definition, the linear space $\mathcal{Q}_{0}^{s-2,s/2-1;\varphi}$ consists of all vectors $\bigl(f,g,h\bigr)\in\mathcal{H}_{0}^{s-2,s/2-1;\varphi}$ that satisfy the compatibility conditions \eqref{8f10}. As we have noted, these conditions are well defined for every $\bigl(f,g,h\bigr)\in \mathcal{H}_{0}^{s-2-\varepsilon,s/2-1-\varepsilon/2}$ for sufficiently small $\varepsilon>0$. Hence, they are also well defined for every $\bigl(f,g,h\bigr)\in\mathcal{H}_{0}^{s-2,s/2-1;\varphi}$ due to the continuous embedding
\begin{equation}\label{8f69a}
\mathcal{H}_{0}^{s-2,s/2-1;\varphi}
\hookrightarrow \mathcal{H}_{0}^{s-2-\varepsilon,s/2-1-\varepsilon/2}.
\end{equation}
The latter follows directly from \eqref{8f5a} and \eqref{8f5b}. Thus, our definition is reasonable.

We endow the linear space $\mathcal{Q}_{0}^{s-2,s/2-1;\varphi}$ with the inner product and norm in the Hilbert space
$\mathcal{H}_{0}^{s-2,s/2-1;\varphi}$. The space $\mathcal{Q}_{0}^{s-2,s/2-1;\varphi}$
is complete, i.e. a Hilbert one. Indeed, if the number $\varepsilon>0$ is sufficiently small, then
$$
\mathcal{Q}_{0}^{s-2,s/2-1;\varphi}=
\mathcal{H}_{0}^{s-2,s/2-1;\varphi}\cap
\mathcal{Q}_{0}^{s-2-\varepsilon,s/2-1-\varepsilon/2}.
$$
Here, the space $\mathcal{Q}_{0}^{s-2-\varepsilon,s/2-1-\varepsilon/2}$ is complete because the differential operators and traces operators used in the compatibility conditions are bounded on the corresponding pairs of Sobolev spaces. Therefore the right-hand side of this equality is complete with respect to the sum of the norms in the components of the intersection, this sum being equivalent to the norm in $\mathcal{H}_{0}^{s-2,s/2-1;\varphi}$ due to \eqref{8f69a}. Thus, the space $\mathcal{Q}_{0}^{s-2,s/2-1;\varphi}$ is complete (with respect to the latter norm).

If $s\in E_{0}$, then we define the Hilbert space $\mathcal{Q}_{0}^{s-2,s/2-1;\varphi}$
by means of the interpolation between its analogs just introduced. Namely, we put
\begin{equation}\label{8f71}
\mathcal{Q}_{0}^{s-2,s/2-1;\varphi}:=\bigl[
\mathcal{Q}_{0}^{s-2-\varepsilon,s/2-1-\varepsilon/2;\varphi},
\mathcal{Q}_{0}^{s-2+\varepsilon,s/2-1+\varepsilon/2;\varphi}
\bigr]_{1/2}.
\end{equation}
Here, the number $\varepsilon\in(0,1/2)$ is arbitrarily chosen, and the right-hand side of the equality is the result of the interpolation
of the written pair of Hilbert spaces with the parameter~$1/2$. We will recall the definition of the interpolation between Hilbert spaces in  Section~\ref{8sec5}. The Hilbert space $\mathcal{Q}_{0}^{s-2,s/2-1;\varphi}$ defined by formula \eqref{8f71} does not depend on the choice of $\varepsilon$ up to equivalence of norms and is continuously embedded in $\mathcal{H}_{0}^{s-2,s/2-1;\varphi}$. This will be shown in Remark~\ref{8rem1} at the end of Section~\ref{8sec6}.

Now we can formulate our main result concerning the parabolic initial-boundary value problem \eqref{8f1}--\eqref{8f2}.

\begin{theorem}\label{8th1}
For arbitrary $s>2$ and $\varphi\in\nobreak\mathcal{M}$ the mapping \eqref{8f7} extends uniquely (by continuity) to an isomorphism
\begin{equation}\label{8f8}
\Lambda_{0}:H^{s,s/2;\varphi}(\Omega)\leftrightarrow
\mathcal{Q}_{0}^{s-2,s/2-1;\varphi}.
\end{equation}
\end{theorem}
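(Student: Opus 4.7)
The plan is to deduce the theorem from Lions and Magenes' isomorphism result in Sobolev spaces by means of the interpolation with a function parameter between Hilbert spaces, to be recalled in Section~\ref{8sec5}. The uniqueness of the continuous extension follows immediately from the density of $C^{\infty}(\overline{\Omega})$ in $H^{s,s/2;\varphi}(\Omega)$, so the substance of the statement is that the extended $\Lambda_0$ is bijective with bounded inverse.

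I would first treat the generic case $s\notin E_{0}$. Choose $\varepsilon>0$ so small that $s_0:=s-\varepsilon$ and $s_1:=s+\varepsilon$ both exceed $2$, lie outside $E_0$, and satisfy $E_0\cap[s_0,s_1]=\emptyset$; the last condition ensures that the finite list of compatibility relations \eqref{8f10} is identical at $s_0$, $s$, and $s_1$. The Lions--Magenes theorem then provides two Sobolev isomorphisms
\[
\Lambda_0:H^{s_j,s_j/2}(\Omega)\leftrightarrow\mathcal{Q}_0^{s_j-2,s_j/2-1},\qquad j\in\{0,1\}.
\]
Next, I would construct, in the Karamata fashion, a function interpolation parameter $\psi$ out of $\varphi$ and $\varepsilon$ so that $[H^{s_0,s_0/2},H^{s_1,s_1/2}]_\psi=H^{s,s/2;\varphi}$ on $\mathbb{R}^{n+1}$, and hence, via a standard retraction-coretraction pair, on $\Omega$. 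The ambient direct-sum target interpolates componentwise to $\mathcal{H}_0^{s-2,s/2-1;\varphi}$ by the same machinery. Applying the interpolation functor to the two endpoint isomorphisms yields a topological isomorphism between $H^{s,s/2;\varphi}(\Omega)$ and the interpolated closed subspace $\bigl[\mathcal{Q}_0^{s_0-2,s_0/2-1},\mathcal{Q}_0^{s_1-2,s_1/2-1}\bigr]_\psi$ of $\mathcal{H}_0^{s-2,s/2-1;\varphi}$.

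The principal obstacle is therefore to identify this interpolated subspace with the target $\mathcal{Q}_0^{s-2,s/2-1;\varphi}$. The inclusion $\subseteq$ is the easier direction: each scalar equation in \eqref{8f10} is the kernel of a linear map acting boundedly on $\mathcal{H}_0^{s_j-2,s_j/2-1}$ for both $j=0$ and $j=1$, and the common kernel of a pair of bounded maps between interpolation couples is preserved under interpolation; this, combined with the componentwise embedding into $\mathcal{H}_0^{s-2,s/2-1;\varphi}$, yields the inclusion. The reverse inclusion, equivalent to the surjectivity of the extended $\Lambda_0$ onto $\mathcal{Q}_0^{s-2,s/2-1;\varphi}$, is where the real work lies. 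I would carry it out by exhibiting a continuous right inverse to $\Lambda_0$ at both Sobolev endpoints $s_j$ simultaneously, constructed by lifting $(f,g,h)$ first to an ambient datum, adjusting $g$ near $t=0$ so that the compatibility conditions become automatic, and then solving the resulting parabolic problem by Slobodetskii's method; interpolating this right inverse would then provide the needed preimage of any $(f,g,h)\in\mathcal{Q}_0^{s-2,s/2-1;\varphi}$. The nonlocal character of the compatibility conditions, which couple the values of $f$ and $h$ on $G$ to traces of $g$ on $\Gamma$ via the recursion \eqref{8f69}, is what makes this lifting delicate.

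The exceptional case $s\in E_0$ is immediate from the definition \eqref{8f71}: pick $\varepsilon\in(0,1/2)$ so that $s\pm\varepsilon/2\notin E_0$, apply the generic case already proved at the two levels $s\pm\varepsilon/2$, and interpolate the resulting isomorphisms with parameter $1/2$. By reiteration the source interpolates to $H^{s,s/2;\varphi}(\Omega)$, and the target is $\mathcal{Q}_0^{s-2,s/2-1;\varphi}$ by construction, which completes the argument.
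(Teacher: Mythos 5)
Your overall strategy coincides with the paper's: interpolate the Lions--Magenes endpoint isomorphisms with a function parameter $\psi$ built from $\varphi$, identify the interpolated source with $H^{s,s/2;\varphi}(\Omega)$ via Proposition~\ref{8prop5}, and handle $s\in E_0$ by the definition \eqref{8f71} together with a reiteration argument (the paper's formula \eqref{8f73}, proved via Proposition~\ref{8prop3}). Your easy inclusion $[\mathcal{Q}_0^{s_0-2,s_0/2-1},\mathcal{Q}_0^{s_1-2,s_1/2-1}]_\psi\subseteq\mathcal{Q}_0^{s-2,s/2-1;\varphi}$ is also sound. But the step you correctly flag as ``where the real work lies'' is left genuinely unproved, and it is precisely the content of the paper's Lemmas \ref{8lem1} and \ref{8lem2}. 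What is needed is a single linear map $P$ on the ambient space that is simultaneously a bounded projector of $\mathcal{H}_0^{s_j-2,s_j/2-1}$ onto $\mathcal{Q}_0^{s_j-2,s_j/2-1}$ for both $j=0,1$; Proposition~\ref{8prop1} then yields the reverse inclusion at once. The paper builds $P(f,g,h)=(f,g^*,h)$ with $g^*=g+T\bigl(v_0{\upharpoonright}\Gamma-g{\upharpoonright}\Gamma,\dots\bigr)$, where $T$ is an extension operator right-inverse to the Cauchy-data map $R:g\mapsto(g{\upharpoonright}\Gamma,\partial_t g{\upharpoonright}\Gamma,\dots)$ on the lateral surface $S$, bounded on the whole scale of spaces involved. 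Constructing $T$ (via the H\"ormander-type formula $T_0v=F^{-1}_{\xi\mapsto x}[\beta(\langle\xi\rangle^2t)\sum_k\widehat{v_k}(\xi)t^k/k!]$, the explicit $H^{2m,m}$ estimates, interpolation, and the passage to $S$ by local charts) occupies most of the paper's Section~\ref{8sec6}. Your phrase ``adjusting $g$ near $t=0$ so that the compatibility conditions become automatic'' names this object but does not produce it, and without its boundedness at both Sobolev endpoints the interpolation machinery cannot be applied.

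Two further points. First, your framing of the hard direction as ``exhibiting a continuous right inverse to $\Lambda_0$'' risks circularity: the interpolated operator $\Lambda_0^{-1}$ is a priori defined only on $[\mathcal{Q}_0^{s_0-2,s_0/2-1},\mathcal{Q}_0^{s_1-2,s_1/2-1}]_\psi$, so to feed it an arbitrary $(f,g,h)\in\mathcal{Q}_0^{s-2,s/2-1;\varphi}$ you must already know the two spaces coincide --- which is the statement to be proved. The non-circular version is exactly the projector route above (or, equivalently, interpolating $\Lambda_0^{-1}\circ P$ defined on all of $\mathcal{H}_0^{s_j-2,s_j/2-1}$). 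Second, there is no need to ``solve the resulting parabolic problem by Slobodetskii's method'': Slobodetskii's results supply trace and extension theorems, not parabolic solvability; once $P$ is in hand, the endpoint inverses furnished by Lions--Magenes interpolate directly and no new solvability argument is required.
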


Otherwise speaking, the parabolic problem \eqref{8f1}--\eqref{8f2} is well posed (in the sense of Hadamard) on the pair of Hilbert spaces $H^{s,s/2;\varphi}(\Omega)$ and $\mathcal{Q}_{0}^{s-2,s/2-1;\varphi}$ whenever $s>2$ and $\varphi\in\nobreak\mathcal{M}$, the right-hand side $\bigl(f,g,h\bigr)\in\mathcal{Q}_{0}^{s-2,s/2-1;\varphi}$ of the problem being defined by closer for an arbitrary function $u\in H^{s,s/2;\varphi}(\Omega)$.

Note that the necessity to define the target space $\mathcal{Q}_{0}^{s-2,s/2-1;\varphi}$ separately in the  $s\in E_{0}$ case is caused by the following: if we defined this space for $s\in E_{0}$ in the way used in the $s\notin E_{0}$ case, then the isomorphism \eqref{8f8} would not hold at least for $\varphi\equiv1$. This follows from a result by Solonnikov \cite[Section~6]{Solonnikov64}.

Consider now the parabolic problem \eqref{8f1}, \eqref{8f3}, \eqref{8f2n}, which corresponds to the first order boundary condition on $S$. Let us write the compatibility conditions for the right-hand sides of this problem.

We associate the linear mapping
\begin{gather}\label{8f11}
\Lambda_1:\,u\mapsto\bigl(Au,Bu,u(\cdot,0)\bigr),\quad
\mbox{where}\quad u\in C^{\infty}(\overline{\Omega}),
\end{gather}
with the problem \eqref{8f1}, \eqref{8f3}, \eqref{8f2n}. For arbitrary  real $s\geq2$, this mapping extends uniquely (by continuity) to a bounded linear operator
\begin{equation}\label{8f7N}
\Lambda_1:\,H^{s,s/2}(\Omega)\rightarrow
H^{s-2,s/2-1}(\Omega)\oplus
H^{s-3/2,s/2-3/4}(S)\oplus H^{s-1}(G).
\end{equation}
Choosing any function $u(x,t)$ from $H^{s,s/2}(\Omega)$, we define the right-hand sides
\begin{equation*}
f\in H^{s-2,s/2-1}(\Omega),\quad g\in H^{s-3/2,s/2-3/4}(S),\quad
\mbox{and}\quad h\in H^{s-1}(G)
\end{equation*}
of the problem by the formula $(f,g,h):=\Lambda_1u$ with the help of this bounded operator. Here, unlike \eqref{8f7b}, the inclusion $u\in H^{s,s/2}(\Omega)$ implies $g=Bu\in H^{s-3/2,s/2-3/4}(S)$ due to \cite[Chapter~II, Theorem 7]{Slobodetskii58}. According to this theorem, the traces $\partial^{k}_t g(\cdot,0)\in H^{s-5/2-2k}(\Gamma)$ are defined by closure for all $k\in\mathbb{Z}$ such that $0\leq k<s/2-5/4$ (and only for these~$k$). We can express these traces in terms of the function $u(x,t)$ and its time derivatives; namely,
\begin{equation}\label{8f7bb}
\begin{aligned}
\partial^{k}_t g(x,0)&=\bigl(\partial^{k}_{t}Bu(x,t)\bigr)|_{t=0}\\
&=\sum_{q=0}^{k}\binom{k}{q}\biggl(\,
\sum_{j=1}^{n}\partial^{k-q}_{t}b_j(x,0)\,D_j\partial^{q}_{t}u(x,0)+
\partial^{k-q}_t b_0(x,0)\,\partial^{\,q}_t u(x,0)\biggr)
\end{aligned}
\end{equation}
for almost all $x\in\Gamma$. Here, all the functions $u(x,0)$, $\partial_{t}u(x,0)$,..., $\partial^{k}_{t}u(x,0)$ of $x\in G$ are expressed in terms of the functions $f(x,t)$ and $h(x)$ by the recurrent formula \eqref{8f9}.

Substituting \eqref{8f9} in the right-hand side of formula \eqref{8f7bb}, we obtain the compatibility conditions
\begin{equation}\label{8f14}
\partial^{k}_t g\!\upharpoonright\!\Gamma=B_k[v_0,\dots,v_k],
\quad\mbox{with}\;\;\;k\in\mathbb{Z}\;\;\;\mbox{and}\;\;\;
0\leq k<s/2-5/4.
\end{equation}
Here, the functions $v_0$, $v_1$,..., $v_k$ are defined on $G$ by the recurrent formula \eqref{8f69}, and we put
\begin{equation*}
B_k[v_0,\dots,v_k](x):=\sum_{q=0}^{k}\binom{k}{q}\biggl(\,
\sum_{j=1}^{n}\partial^{\,k-q}_t b_j(x,0)\,D_jv_q(x)+
\partial^{\,k-q}_t b_0(x,0)\,v_q(x)\biggr)
\end{equation*}
for almost all $x\in\Gamma$. Here, we consider the functions $D_jv_q(x)$ and $v_q(x)$ of $x\in\Gamma$ as the traces of the functions $D_jv_q\in H^{s-2-2q}(G)$ and $v_q\in H^{s-1-2q}(G)$ on $\Gamma$. Thus,
$$
B_k[v_0,\dots,v_k]\in H^{s-5/2-2k}(\Gamma)
$$
due to \eqref{8f69aa} and $s-5/2-2k>0$. Note that if $s\leq5/2$, then there are no compatibility conditions.

We set $E_{1}:=\{2r+1/2:1\leq r\in\mathbb{Z}\}$. Observe that $E_{1}$ is the set of all discontinuities of the function that assigns the number of compatibility conditions \eqref{8f14} to $s\geq2$.

To formulate our isomorphism theorem for the parabolic problem \eqref{8f1}, \eqref{8f3}, \eqref{8f2n}, we introduce the source and target spaces of this isomorphism. Let $s>2$ and $\varphi\in\mathcal{M}$. As above, we take $H^{s,s/2;\varphi}(\Omega)$ as the source space. The target space denoted by $\mathcal{Q}_{1}^{s-2,s/2-1;\varphi}$ is embedded in the Hilbert space
\begin{gather*}
\mathcal{H}_{1}^{s-2,s/2-1;\varphi}:=
H^{s-2,s/2-1;\varphi}(\Omega)\oplus
H^{s-3/2,s/2-3/4;\varphi}(S)\oplus H^{s-1;\varphi}(G).
\end{gather*}
In the Sobolev case of $\varphi\equiv1$ this space coincides with the target space of the bounded operator \eqref{8f7N}.

If $s\notin E_{1}$, then the linear space $\mathcal{Q}_{1}^{s-2,s/2-1;\varphi}$ is defined to consist of all vectors $\bigl(f,g,h\bigr)\in \mathcal{H}_{1}^{s-2,s/2-1;\varphi}$ that satisfy the compatibility conditions \eqref{8f14}. The definition is reasonable because these conditions are well defined for every $\bigl(f,g,h\bigr)\in \mathcal{H}_{1}^{s-2-\varepsilon,s/2-1-\varepsilon/2}$ for sufficiently small $\varepsilon>0$ and because
\begin{equation}\label{8f69N}
\mathcal{H}_{1}^{s-2,s/2-1;\varphi}
\hookrightarrow \mathcal{H}_{1}^{s-2-\varepsilon,s/2-1-\varepsilon/2}.
\end{equation}
This continuous embedding follows immediately from \eqref{8f5a} and \eqref{8f5b}. The linear space $\mathcal{Q}_{1}^{s-2,s/2-1;\varphi}$ is
endowed with the inner product and the norm in the Hilbert space
$\mathcal{H}_{1}^{s-2,s/2-1;\varphi}$. The space $\mathcal{Q}_{1}^{s-2,s/2-1;\varphi}$ is
complete, i.e. a Hilbert one. This is justified by the same reasoning as we have used to prove the completeness of $\mathcal{Q}_{0}^{s-2,s/2-1;\varphi}$. Note that if $2<s<5/2$, then the spaces $\mathcal{H}_{1}^{s-2,s/2-1;\varphi}$ and $\mathcal{Q}_{1}^{s-2,s/2-1;\varphi}$ coincide because the compatibility conditions \eqref{8f14} are absent.

If $s\in E_{1}$, then we define the Hilbert space $\mathcal{Q}_{1}^{s-2,s/2-1;\varphi}$ by the interpolation, namely
\begin{equation}\label{8f72}
\mathcal{Q}_{1}^{s-2,s/2-1;\varphi}:=\bigl[
\mathcal{Q}_{1}^{s-2-\varepsilon,s/2-1-\varepsilon/2;\varphi},
\mathcal{Q}_{1}^{s-2+\varepsilon,s/2-1+\varepsilon/2;\varphi}
\bigr]_{1/2},
\end{equation}
with the number $\varepsilon\in(0,1/2)$  chosen arbitrarily. This Hilbert space does not depend on the choice of $\varepsilon$ up to equivalence of norms and is embedded continuously in $\mathcal{H}_{1}^{s-2,s/2-1;\varphi}$, which will be shown in Remark~\ref{8rem1}.

Now we can formulate our main result concerning the parabolic initial-boundary value problem \eqref{8f1}, \eqref{8f3}, \eqref{8f2n}.

\begin{theorem}\label{8th2}
For arbitrary $s>2$ and $\varphi\in\nobreak\mathcal{M}$ the mapping \eqref{8f11} extends uniquely (by continuity) to an isomorphism
\begin{equation}\label{8f12}
\Lambda_{1}:H^{s,s/2;\varphi}(\Omega)\leftrightarrow
\mathcal{Q}_{1}^{s-2,s/2-1;\varphi}.
\end{equation}
\end{theorem}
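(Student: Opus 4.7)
The plan is to deduce Theorem~\ref{8th2} from the Lions--Magenes--Solonnikov isomorphism theorem for the anisotropic Sobolev scale (the case $\varphi\equiv1$) by means of interpolation with a function parameter between Hilbert spaces, exactly in the spirit of the treatment of Theorem~\ref{8th1}. First I would invoke the classical result: for every $s>2$ with $s\notin E_{1}$, the mapping~\eqref{8f11} extends to an isomorphism $\Lambda_{1}:H^{s,s/2}(\Omega)\leftrightarrow\mathcal{Q}_{1}^{s-2,s/2-1}$. Given $s>2$, $s\notin E_{1}$, and $\varphi\in\mathcal{M}$, I would then fix $\varepsilon\in(0,1/2)$ so small that the closed interval $[s-\varepsilon,s+\varepsilon]$ is contained in $(2,\infty)\setminus E_{1}$; at the two endpoints $s\mp\varepsilon$ the Sobolev isomorphism is available.

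Next, following the Mikhailets--Murach scheme from \cite{MikhailetsMurach14}, I would construct an interpolation parameter $\psi=\psi_{s,\varepsilon,\varphi}$ of the form $\psi(t)=t^{1/2}\varphi(\cdot)$-like, such that interpolating the couple of anisotropic Sobolev spaces $(H^{s-\varepsilon,(s-\varepsilon)/2}(\Omega),H^{s+\varepsilon,(s+\varepsilon)/2}(\Omega))$ with parameter $\psi$ reproduces $H^{s,s/2;\varphi}(\Omega)$ up to equivalence of norms; and analogously for the three summands of $\mathcal{H}_{1}^{s-2,s/2-1;\varphi}$ via the corresponding couples on $\Omega$, $S$, and $G$. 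Because interpolation of Hilbert spaces commutes with finite orthogonal sums, the same functor reproduces $\mathcal{H}_{1}^{s-2,s/2-1;\varphi}$ from the two Sobolev targets. The interpolation property of bounded operators applied to $\Lambda_{1}$ at the two endpoints then yields a bounded operator on $H^{s,s/2;\varphi}(\Omega)$, and its range identification is the one remaining point.

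The key step, and the main obstacle, is to verify that the interpolation of the subspaces cut out by the compatibility conditions~\eqref{8f14} coincides with the space introduced in Section~\ref{8sec4}, i.e.
\begin{equation*}
\bigl[\mathcal{Q}_{1}^{s-2-\varepsilon,s/2-1-\varepsilon/2},\,
\mathcal{Q}_{1}^{s-2+\varepsilon,s/2-1+\varepsilon/2}\bigr]_{\psi}
=\mathcal{Q}_{1}^{s-2,s/2-1;\varphi}.
\end{equation*}
Since $\mathcal{Q}_{1}^{s-2\pm\varepsilon,s/2-1\pm\varepsilon/2}$ sits as a closed subspace of $\mathcal{H}_{1}^{s-2\pm\varepsilon,s/2-1\pm\varepsilon/2}$, the natural strategy is to produce a bounded linear projector of $\mathcal{H}_{1}$ onto $\mathcal{Q}_{1}$ (equivalently, a bounded linear right inverse of the ``defect'' map that reads off the jets $\partial^{k}_{t}g|_{\Gamma}-B_{k}[v_{0},\dots,v_{k}]$) that acts boundedly at both Sobolev endpoints simultaneously; such a projector is obtained by a standard lifting from the boundary of $G$ into $\Omega$ and $S$. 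The interpolation functor then commutes with passage to the kernel of this joint projector, which gives the required identification; for $s\in E_{1}$ the formula~\eqref{8f72} is taken as a definition and reiteration of the interpolation reduces the claim to the already-treated case $s\notin E_{1}$.

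Once the target space is identified, the isomorphism~\eqref{8f12} follows from the interpolation property of bounded operators applied to the Sobolev endpoints, and uniqueness of the extension of~\eqref{8f11} is automatic from the density of $C^{\infty}(\overline{\Omega})$ in $H^{s,s/2;\varphi}(\Omega)$, which itself follows from the embedding~\eqref{8f5a} and the corresponding density in Sobolev spaces. Thus the whole proof reduces to two technical tasks: picking the right interpolation parameter $\psi$ matched to $\varphi$, and constructing the joint right inverse that witnesses the compatibility conditions at both endpoints.
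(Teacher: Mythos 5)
Your proposal is correct and follows essentially the same route as the paper: Sobolev endpoint isomorphisms from Lions--Magenes, interpolation with a function parameter matched to $\varphi$ (the paper's Propositions~\ref{8prop4} and \ref{8prop5}), identification of the interpolated target via a common projector onto the compatibility subspace built from a trace-lifting right inverse (the paper's Lemmas~\ref{8lem1} and \ref{8lem2} combined with Proposition~\ref{8prop1}), and reiteration for $s\in E_{1}$. The only cosmetic slip is that $\mathcal{Q}_{1}$ is the \emph{range} of the joint projector (the kernel of its complement), and the paper's lifting modifies only the $g$-component on $S$ rather than on both $\Omega$ and $S$.
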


In other words, the parabolic problem \eqref{8f1}, \eqref{8f3}, \eqref{8f2n} is well posed on the pair of Hilbert spaces $H^{s,s/2;\varphi}(\Omega)$ and $\mathcal{Q}_{1}^{s-2,s/2-1;\varphi}$ whenever $s>2$ and $\varphi\in\nobreak\mathcal{M}$, the right-hand side  $\bigl(f,g,h\bigr)\in\mathcal{Q}_{1}^{s-2,s/2-1;\varphi}$ of this problem being defined by closer for an arbitrary function $u\in H^{s,s/2;\varphi}(\Omega)$.

Note that the necessity to define the target space $\mathcal{Q}_{1}^{s-2,s/2-1;\varphi}$ separately in the  $s\in E_{1}$ case is stipulated by a similar cause as that indicated for the space $\mathcal{Q}_{0}^{s-2,s/2-1;\varphi}$. Namely, if we defined this space for $s\in E_{1}$ in the way used in the $s\notin E_{1}$ case, then the isomorphism \eqref{8f12} would not hold at least when $\varphi\equiv1$ and \eqref{8f2n} is the Neumann boundary condition (see \cite[Section~6]{Solonnikov64}).

Theorems \ref{8th1} and \ref{8th2} are known in the Sobolev case where $\varphi\equiv1$ and neither $s$ nor $s/2$ is half-integer.
Namely, they are contained in Agranovich and Vishik's result
\cite[Theorem~12.1]{AgranovichVishik64} in the case of  $s,s/2\in\mathbb{Z}$ and are covered by Lions and Magenes' result \cite[Theorem~6.2]{LionsMagenes72ii}. Solonnikov \cite[Theorem~17]{Solonnikov64} proved the corresponding a priory estimates for anisotropic Sobolev norms of solutions to the problem \eqref{8f1}--\eqref{8f2} and to the problem \eqref{8f1}, \eqref{8f3}, \eqref{8f2n} provided that \eqref{8f2n} is the Neumann boundary condition. Note that these results include the limiting case of $s=2$.

In Section \ref{8sec6} we will deduce Theorems \ref{8th1} and \ref{8th2} from the above-mentioned results with the help of the method of interpolation with a function parameter between Hilbert spaces, specifically between Sobolev inner product spaces. Therefore we devote the next section to this method and its applications to Sobolev and H\"ormander spaces.

\section{Interpolation with a function parameter between Hilbert spaces}\label{8sec5}

This method of interpolation is a natural generalization of the classical interpolation method by S.~Krein and J.-L.~Lions to the case when a general enough function is used instead of a number as an interpolation parameter; see, e.g., monographs \cite[Chapter~IV, Section~1, Subsection~10]{KreinPetuninSemenov82} and \cite[Chapter~1, Sections 2 and 5]{LionsMagenes72i}. For our purposes, it is sufficient to restrict the discussion of the interpolation with a function parameter to the case of separable complex Hilbert spaces. We mainly follow the monograph \cite[Section~1.1]{MikhailetsMurach14}, which systematically expounds this interpolation (see also \cite[Section~2]{MikhailetsMurach08MFAT1}).

Let $X:=[X_{0},X_{1}]$ be an ordered pair of separable complex Hilbert spaces such that $X_{1}\subseteq X_{0}$ and this embedding is continuous and dense. This pair is said to be admissible. For $X$, there is a positive-definite self-adjoint operator $J$ on $X_{0}$ with the domain $X_{1}$ such that $\|Jv\|_{X_{0}}=\|v\|_{X_{1}}$ for every $v\in X_{1}$. This operator is uniquely determined by the pair $X$ and is called a generating operator for~$X$; see, e.g., \cite[Chapter~IV, Theorem~1.12]{KreinPetuninSemenov82}. The operator defines an isometric isomorphism $J:X_{1}\leftrightarrow X_{0}$.

Let $\mathcal{B}$ denote the set of all Borel measurable functions $\psi:(0,\infty)\rightarrow(0,\infty)$ such that $\psi$ is bounded on each compact interval $[a,b]$, with $0<a<b<\infty$, and that $1/\psi$ is bounded on every semiaxis $[a,\infty)$, with $a>0$.

Choosing a function $\psi\in\mathcal{B}$ arbitrarily, we consider the (generally, unbounded) operator $\psi(J)$ defined on $X_{0}$ as the Borel function $\psi$ of $J$. This operator is built with the help of Spectral Theorem applied to the self-adjoint operator $J$. Let $[X_{0},X_{1}]_{\psi}$ or, simply, $X_{\psi}$ denote the domain of $\psi(J)$ endowed with the inner product $(v_{1},v_{2})_{X_{\psi}}:=(\psi(J)v_{1},\psi(J)v_{2})_{X_{0}}$
and the corresponding norm $\|v\|_{X_{\psi}}:=\|\psi(J)v\|_{X_{0}}$. The linear space $X_{\psi}$ is Hilbert and separable with respect to this norm.

A function $\psi\in\mathcal{B}$ is called an interpolation parameter if the following condition is satisfied for all admissible pairs $X=[X_{0},X_{1}]$ and $Y=[Y_{0},Y_{1}]$ of Hilbert spaces and for an arbitrary linear mapping $T$ given on $X_{0}$: if the restriction of $T$ to $X_{j}$ is a bounded operator $T:X_{j}\rightarrow Y_{j}$ for each $j\in\{0,1\}$, then the restriction of $T$ to
$X_{\psi}$ is also a bounded operator $T:X_{\psi}\rightarrow Y_{\psi}$.

If $\psi$ is an interpolation parameter, then we say that the Hilbert space $X_{\psi}$ is obtained by the interpolation with the function parameter $\psi$ of the pair $X=\nobreak[X_{0},X_{1}]$ or, otherwise speaking, between the spaces $X_{0}$ and $X_{1}$. In this case, the dense and continuous embeddings $X_{1}\hookrightarrow
X_{\psi}\hookrightarrow X_{0}$ hold.

The class of all interpolation parameters (in the sense of the given definition) admits a constructive description. Namely, a function $\psi\in\mathcal{B}$ is an interpolation parameter if and only if $\psi$ is pseudoconcave in a neighbourhood of infinity. The latter property means that there exists a concave positive function $\psi_{1}(r)$ of $r\gg1$ that both the functions $\psi/\psi_{1}$ and $\psi_{1}/\psi$ are bounded in some neighbourhood of infinity. This criterion follows from Peetre's description of all interpolation functions for the weighted Lebesgue spaces \cite{Peetre66, Peetre68} (this result of Peetre is set forth in the monograph \cite[Theorem 5.4.4]{BerghLefstrem76}). The proof of the criterion is given in \cite[Section 1.1.9]{MikhailetsMurach14}.

An application of this criterion to power functions gives the classical result by Lions and S.~Krein. Namely, the function $\psi(r)\equiv r^{\theta}$ is an interpolation parameter whenever $\nobreak{0\leq\theta\leq1}$. In this case, the exponent $\theta$ serves as a number parameter of the interpolation, and the interpolation space $X_{\psi}$ is also denoted by $X_{\theta}$. This interpolation was used in formulas \eqref{8f71} and \eqref{8f72} in the special case of $\theta=1/2$.

Let us formulate some general properties of interpolation with a function parameter; they will be used in our proofs. The first of these properties enables us to reduce the interpolation of subspaces to the interpolation of the whole spaces (see \cite[Theorem~1.6]{MikhailetsMurach14} or \cite[Section~1.17.1, Theorem~1]{Triebel95}). As usual, subspaces of normed spaces are assumed to be closed. Generally, we consider nonorthogonal projectors onto subspaces of a Hilbert space.

\begin{proposition}\label{8prop1}
Let $X=[X_{0},X_{1}]$ be an admissible pair of Hilbert spaces, and let $Y_{0}$ be a subspace of $X_{0}$. Then $Y_{1}:=X_{1}\cap Y_{0}$ is a subspace of $X_{1}$. Suppose that there exists a linear mapping $P:X_{0}\rightarrow X_{0}$ such that $P$ is a projector of the space $X_{j}$ onto its subspace $Y_{j}$ for each $j\in\{0,\,1\}$. Then the pair $[Y_{0},Y_{1}]$ is admissible, and $[Y_{0},Y_{1}]_{\psi}=X_{\psi}\cap Y_{0}$ with equivalence of norms for an arbitrary interpolation parameter~$\psi\in\mathcal{B}$. Here, $X_{\psi}\cap Y_{0}$ is a subspace of $X_{\psi}$.
\end{proposition}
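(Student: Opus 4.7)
My plan is to verify the three claims in order: that $Y_1=X_1\cap Y_0$ is closed in $X_1$, that the pair $[Y_0,Y_1]$ is admissible, and finally that $[Y_0,Y_1]_\psi=X_\psi\cap Y_0$ up to equivalence of norms, with the intersection being a subspace of $X_\psi$. The closedness of $Y_1$ in $X_1$ is immediate from the continuity of the embedding $X_1\hookrightarrow X_0$: a limit in $X_1$ of elements of $Y_1$ is also a limit in $X_0$, hence lies in $Y_0$ and in $X_1$. A completely analogous argument shows that $X_\psi\cap Y_0$ is closed in $X_\psi$, using the continuous embedding $X_\psi\hookrightarrow X_0$ that is part of the interpolation construction.

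To establish that $[Y_0,Y_1]$ is admissible I need density of $Y_1$ in $Y_0$. Here the projector $P$ is crucial: given $y\in Y_0\subset X_0$, I would use the density of $X_1$ in $X_0$ to select $x_n\in X_1$ with $x_n\to y$ in $X_0$, apply $P$ to obtain $Px_n\in Y_1$ (because $P$ maps $X_1$ into $Y_1$), and note that $Px_n\to Py=y$ in $X_0$ by boundedness of $P$ on $X_0$ combined with $y\in Y_0$. Continuity of the embedding $Y_1\hookrightarrow Y_0$ is inherited from $X_1\hookrightarrow X_0$.

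For the interpolation identity, I would argue by two continuous inclusions. The inclusion $[Y_0,Y_1]_\psi\hookrightarrow X_\psi\cap Y_0$ follows by applying the defining property of the interpolation parameter $\psi$ to the identity embedding $T\colon Y_j\hookrightarrow X_j$ ($j=0,1$), yielding a bounded operator $[Y_0,Y_1]_\psi\to X_\psi$; combined with the embedding $[Y_0,Y_1]_\psi\hookrightarrow Y_0$ provided by admissibility this gives the desired bounded embedding into $X_\psi\cap Y_0$. For the reverse inclusion I would apply the interpolation property to $P$ itself, viewed as a bounded map $X_j\to Y_j$ for $j=0,1$; this gives a bounded map $P\colon X_\psi\to[Y_0,Y_1]_\psi$. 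For any $v\in X_\psi\cap Y_0$, since $P$ acts as the identity on $Y_0$, we have $v=Pv\in[Y_0,Y_1]_\psi$ with $\|v\|_{[Y_0,Y_1]_\psi}\leq\|P\|\cdot\|v\|_{X_\psi}$.

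The main subtlety I anticipate is purely bookkeeping rather than conceptual: one must be careful that $P$ is only assumed linear on $X_0$ (not necessarily continuous a priori), but boundedness on each $X_j$ is part of the hypothesis that it is a projector onto the subspace $Y_j$ of the Hilbert space $X_j$. Once this is clear, the two applications of the interpolation parameter property of $\psi$ (to the identity embedding and to $P$) do all the work; the two norm inequalities they produce are precisely the equivalence of norms between $[Y_0,Y_1]_\psi$ and $X_\psi\cap Y_0$.
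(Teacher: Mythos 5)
Your proof is correct and is essentially the standard argument for this result; the paper itself does not prove Proposition 4.1 but cites it (to Mikhailets--Murach's monograph and to Triebel, Section 1.17.1, Theorem 1), and the proofs given there proceed exactly as you do: density of $Y_1$ in $Y_0$ via the projector, then the two applications of the interpolation property (to the inclusion $Y_j\hookrightarrow X_j$ and to $P\colon X_j\to Y_j$) to get the two continuous inclusions and hence the equality with equivalent norms. Your remark that boundedness of $P$ on each $X_j$ is built into the hypothesis that it is a projector onto a subspace matches the paper's convention on (possibly nonorthogonal) projectors.
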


The second property reduces the interpolation of orthogonal sums of Hilbert spaces to the interpolation of their summands (see \cite[Theorem~1.8]{MikhailetsMurach14}.

\begin{proposition}\label{8prop2}
Let $[X_{0}^{(j)},X_{1}^{(j)}]$, with $j=1,\ldots,q$, be a finite collection of admissible pairs of Hilbert spaces. Then
$$
\biggl[\,\bigoplus_{j=1}^{q}X_{0}^{(j)},\,
\bigoplus_{j=1}^{q}X_{1}^{(j)}\biggr]_{\psi}=\,
\bigoplus_{j=1}^{q}\bigl[X_{0}^{(j)},\,X_{1}^{(j)}\bigr]_{\psi}
$$
with equality of norms for every function $\psi\in\mathcal{B}$.
\end{proposition}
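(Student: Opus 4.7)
The plan is to identify the generating operator of the direct sum pair with the direct sum of the generating operators, and then invoke the Borel functional calculus summandwise. For each admissible pair $[X_{0}^{(j)},X_{1}^{(j)}]$, let $J_{j}$ be its generating operator, so that $J_{j}$ is a positive-definite self-adjoint operator on $X_{0}^{(j)}$ with domain $X_{1}^{(j)}$ and $\|J_{j}v_{j}\|_{X_{0}^{(j)}}=\|v_{j}\|_{X_{1}^{(j)}}$ for every $v_{j}\in X_{1}^{(j)}$. Define $J$ on $\bigoplus_{j}X_{1}^{(j)}\subset\bigoplus_{j}X_{0}^{(j)}$ by $Jv:=(J_{1}v_{1},\ldots,J_{q}v_{q})$. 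Then $J$ is positive-definite and self-adjoint (being the orthogonal direct sum of positive-definite self-adjoint operators), and one has
$$
\|Jv\|^{2}_{\oplus_{j}X_{0}^{(j)}}=\sum_{j=1}^{q}\|J_{j}v_{j}\|^{2}_{X_{0}^{(j)}}=\sum_{j=1}^{q}\|v_{j}\|^{2}_{X_{1}^{(j)}}=\|v\|^{2}_{\oplus_{j}X_{1}^{(j)}}.
$$
By the uniqueness of the generating operator (mentioned just before Proposition~\ref{8prop1}), $J$ is the generating operator of the admissible pair $[\bigoplus_{j}X_{0}^{(j)},\bigoplus_{j}X_{1}^{(j)}]$.

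Next I would verify the functional-calculus identity $\psi(J)=\bigoplus_{j}\psi(J_{j})$ with the natural domain $\mathrm{dom}\,\psi(J)=\{v:v_{j}\in\mathrm{dom}\,\psi(J_{j})\ \text{for each}\ j\}$. For a bounded Borel $\psi$ this is immediate from the block-diagonal form of the spectral projections $E_{J}(\Delta)=\bigoplus_{j}E_{J_{j}}(\Delta)$, and the general unbounded case follows by approximating $\psi$ with its truncations $\psi\wedge n$ and using the domain description of $\psi(J)$ given by the spectral theorem. Once this is in place, a direct computation yields
$$
\|v\|^{2}_{X_{\psi}}=\|\psi(J)v\|^{2}_{\oplus_{j}X_{0}^{(j)}}=\sum_{j=1}^{q}\|\psi(J_{j})v_{j}\|^{2}_{X_{0}^{(j)}}=\sum_{j=1}^{q}\|v_{j}\|^{2}_{[X_{0}^{(j)},X_{1}^{(j)}]_{\psi}},
$$
which is exactly the asserted equality of spaces with equality of norms.

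The main (and essentially only) obstacle is the functional-calculus step: one must rigorously justify that $\psi$ of a direct sum of self-adjoint operators is the direct sum of $\psi$ applied to each summand, together with the correct domain description. This is a standard consequence of the spectral theorem, but it is the place where some care is needed. Note that no appeal to the interpolation-parameter criterion is required, since the proposition is stated for every $\psi\in\mathcal{B}$ and depends only on the definition of $X_{\psi}$ via the generating operator.
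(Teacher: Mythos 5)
Your argument is correct: identifying the generating operator of the direct-sum pair with the orthogonal direct sum of the generating operators $J_j$, and then observing that $\psi(J)=\bigoplus_j\psi(J_j)$ with the componentwise domain (immediate from the block-diagonal spectral measure, since for a finite sum the integrability condition holds iff it holds for each summand), gives the stated equality of norms. The paper does not prove Proposition~\ref{8prop2} itself but refers to \cite[Theorem~1.8]{MikhailetsMurach14}, whose proof proceeds by essentially the same generating-operator and spectral-calculus route, so your proposal matches the intended argument.
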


The third property shows that the interpolation with a function parameter is stable with respect to its repeated fulfillment \cite[Theorem~1.3]{MikhailetsMurach14}.

\begin{proposition}\label{8prop3}
Let $\alpha,\beta,\psi\in\mathcal{B}$, and suppose that the function $\alpha/\beta$ is bounded in a neighbourhood of infinity. Define the function $\omega\in\mathcal{B}$  by the formula
$\omega(r):=\alpha(r)\psi(\beta(r)/\alpha(r))$ for $r>0$. Then $\omega\in\mathcal{B}$, and $[X_{\alpha},X_{\beta}]_{\psi}=X_{\omega}$ with equality of norms for every admissible pair $X$ of Hilbert spaces. Besides, if $\alpha,\beta,\psi$ are interpolation parameters, then
$\omega$ is also an interpolation parameter.
\end{proposition}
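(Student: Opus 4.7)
The plan is to work entirely through the Borel functional calculus for the generating operator $J$ of the pair $X=[X_0,X_1]$. Since $J$ is positive-definite self-adjoint on $X_0$ with $J^{-1}$ bounded, its spectrum $\sigma(J)$ lies in some $[c,\infty)$. Because both $\alpha$ and $\beta$ lie in $\mathcal{B}$, they are bounded above on compact subintervals of $[c,\infty)$ and bounded below on $[c,\infty)$; combined with the hypothesis that $\alpha/\beta$ is bounded near infinity, this makes $\alpha/\beta$ bounded and $\beta/\alpha$ bounded below on $\sigma(J)$. Routine verification then shows $\omega(r)=\alpha(r)\psi(\beta(r)/\alpha(r))$ is Borel, bounded on compacts (use that $\beta/\alpha$ takes values in a compact subinterval of $(0,\infty)$ on any $[a,b]$), and has $1/\omega$ bounded on each semiaxis $[a,\infty)$, so $\omega\in\mathcal{B}$.

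The heart of the argument is to identify $\widetilde{J}:=(\beta/\alpha)(J)$ as the generating operator of the admissible pair $[X_\alpha,X_\beta]$. The isometric isomorphism $\alpha(J):X_\alpha\leftrightarrow X_0$ conjugates $\widetilde{J}$ on $X_\alpha$ to multiplication by $(\beta/\alpha)$ on $X_0$ in the functional calculus of $J$, which transfers positive self-adjointness and the lower bound from $X_0$ to $X_\alpha$. A direct computation
\[
\|\widetilde{J}u\|_{X_\alpha}=\|\alpha(J)(\beta/\alpha)(J)u\|_{X_0}=\|\beta(J)u\|_{X_0}=\|u\|_{X_\beta}
\]
shows that $\widetilde{J}$ carries its domain (which equals $X_\beta$ by the same calculation) isometrically onto $X_\alpha$, so by uniqueness $\widetilde{J}$ is indeed the generating operator. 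Applying $\psi$ via the functional calculus and using commutativity of all Borel functions of $J$, we obtain for $u$ in the domain of $\psi(\widetilde{J})$
\[
\|\psi(\widetilde{J})u\|_{X_\alpha}=\|\alpha(J)\,\psi((\beta/\alpha)(J))u\|_{X_0}=\|(\alpha\cdot\psi\circ(\beta/\alpha))(J)u\|_{X_0}=\|\omega(J)u\|_{X_0}=\|u\|_{X_\omega},
\]
and the domains coincide as well, yielding $[X_\alpha,X_\beta]_\psi=X_\omega$ with equality of norms. The last assertion is then purely formal: given any $T$ bounded on each $X_j\to Y_j$, the hypothesis that $\alpha,\beta$ are interpolation parameters yields boundedness on $X_\alpha\to Y_\alpha$ and $X_\beta\to Y_\beta$; applying the definition of interpolation parameter to $\psi$ on the admissible pairs $[X_\alpha,X_\beta]$ and $[Y_\alpha,Y_\beta]$ gives boundedness $X_\omega\to Y_\omega$, so $\omega$ is an interpolation parameter.

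The main obstacle is bookkeeping rather than substance: one must track domains carefully when composing unbounded Borel functions of $J$, in particular verifying that $\widetilde{J}$ is actually self-adjoint (not merely symmetric) on $X_\alpha$ with domain $X_\beta$, and that the formal identity $\alpha(J)\,\psi((\beta/\alpha)(J))=\omega(J)$ holds on the precise domain $X_\omega$. All of this is standard once the key observation is made that both $X_\alpha$ and $X_\beta$ are spectrally defined from the same operator $J$, so everything reduces to multiplication operators on an $L_2$-space via the spectral representation of $J$.
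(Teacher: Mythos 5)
Your argument is correct: identifying $(\beta/\alpha)(J)$ as the generating operator of the admissible pair $[X_\alpha,X_\beta]$ and then invoking the composition rule for the Borel functional calculus is exactly the standard proof, and the domain and boundedness checks you flag (using that $\beta/\alpha$ is bounded below away from $0$ on $\sigma(J)$, so that $1/\psi(\beta/\alpha)$ is bounded there) all go through. The paper itself offers no proof of this proposition, citing Theorem~1.3 of the Mikhailets--Murach monograph, and your spectral-representation argument is essentially the one given in that source.
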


Our proof of Theorems \ref{8th1} and \ref{8th2} is based on the key fact that the interpolation with an appropriate function parameter between margin Sobolev spaces in \eqref{8f5a} and \eqref{8f5b} gives the intermediate H\"ormander spaces $H^{s,s/2;\varphi}(\cdot)$ and $H^{s;\varphi}(\cdot)$ respectively. Let us formulate this property separately for isotropic and for anisotropic spaces.

\begin{proposition}\label{8prop4}
Let real numbers $s_{0}$, $s$, and $s_{1}$ satisfy the inequalities
$s_{0}<s<s_{1}$, and let $\varphi\in\mathcal{M}$. Put
\begin{equation}\label{8f16}
\psi(r):=
\begin{cases}
\;r^{(s-s_{0})/(s_{1}-s_{0})}\,\varphi(r^{1/(s_{1}-s_{0})})&\text{if}
\quad r\geq1,\\
\;\varphi(1) & \text{if}\quad0<r<1.
\end{cases}
\end{equation}
Then the function $\psi\in\mathcal{B}$ is an interpolation parameter, and the equality of spaces
\begin{equation}\label{8f49}
H^{s-\lambda;\varphi}(W)=
\bigl[H^{s_{0}-\lambda}(W),H^{s_{1}-\lambda}(W)\bigr]_{\psi}
\end{equation}
holds true up to equivalence of norms for arbitrary $\lambda\in\mathbb{R}$ provided that $W=G$ or $W=\Gamma$. If $W=\mathbb{R}^{k}$ with $1\leq k\in\mathbb{Z}$, then \eqref{8f49} holds true with equality of norms in spaces.
\end{proposition}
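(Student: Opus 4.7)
The plan is three-fold: first to verify that $\psi$ is an interpolation parameter in the sense recalled after Proposition~\ref{8prop3}, then to prove the equality of spaces \eqref{8f49} on the whole space $\mathbb{R}^{k}$ by a direct Fourier-side computation, and finally to transfer this to $W=G$ and $W=\Gamma$ via the abstract Propositions~\ref{8prop1} and~\ref{8prop2}.

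For the first step I would check that $\psi\in\mathcal{B}$. The function is positive and Borel measurable because $\varphi$ is so; both $\psi$ and $1/\psi$ are bounded on every compact subinterval of $(0,\infty)$ thanks to the boundedness of $\varphi$ and $1/\varphi$ on each $[1,b]$ together with the continuity and positivity of the prefactor $r^{\theta}$, where $\theta:=(s-s_{0})/(s_{1}-s_{0})\in(0,1)$. Since any slowly varying $\varphi$ satisfies $r^{-\varepsilon}\leq\varphi(r)\leq r^{\varepsilon}$ for $r$ large and every $\varepsilon>0$, one has $\psi(r)\to\infty$, whence $1/\psi$ is bounded on each semiaxis $[a,\infty)$. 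To obtain pseudoconcavity near infinity I would note that $r\mapsto r^{\theta}\varphi(r^{1/(s_{1}-s_{0})})$ is regularly varying of positive index $\theta\in(0,1)$, and by the Karamata representation theorem such functions are asymptotically equivalent to an increasing concave function (see~\cite{Seneta76}); Peetre's criterion recalled in the excerpt then identifies $\psi$ as an interpolation parameter.

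For $W=\mathbb{R}^{k}$ the generating operator $J$ of the admissible pair $[H^{s_{0}-\lambda}(\mathbb{R}^{k}),H^{s_{1}-\lambda}(\mathbb{R}^{k})]$ is the Fourier multiplier with symbol $(1+|\xi|^{2})^{(s_{1}-s_{0})/2}$, as is immediate from the Parseval identity and the definition of the Sobolev norms. By the Borel functional calculus $\psi(J)$ is then the Fourier multiplier with symbol $\psi((1+|\xi|^{2})^{(s_{1}-s_{0})/2})=(1+|\xi|^{2})^{(s-s_{0})/2}\varphi((1+|\xi|^{2})^{1/2})$; the branch of $\psi$ on $(0,1)$ is irrelevant because the symbol of $J$ is bounded below by $1$. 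A one-line computation then yields $\|\psi(J)v\|_{H^{s_{0}-\lambda}(\mathbb{R}^{k})}=\|v\|_{H^{s-\lambda;\varphi}(\mathbb{R}^{k})}$, which proves equality of norms in \eqref{8f49}.

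For $W=G$ I would invoke a Stein-type bounded linear extension operator $E$ that maps $H^{s-\lambda}(G)$ into $H^{s-\lambda}(\mathbb{R}^{n})$ continuously for every real $s$ in an interval containing $[s_{0},s_{1}]$; the restriction $R$ to $G$ is automatically continuous between the corresponding spaces, so $P:=E\circ R$ is a bounded projector onto the isomorphic copy of $H^{s_{j}-\lambda}(G)$ inside $H^{s_{j}-\lambda}(\mathbb{R}^{n})$ for each $j$. Proposition~\ref{8prop1} combined with the previously proved $\mathbb{R}^{n}$ case then gives $[H^{s_{0}-\lambda}(G),H^{s_{1}-\lambda}(G)]_{\psi}=H^{s-\lambda;\varphi}(G)$ up to equivalence of norms. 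For $W=\Gamma$ I would trivialise via the local charts $\theta_{j}$ and partition of unity $\{\chi_{j}\}$ from the excerpt: the map $g\mapsto((\chi_{j}\circ\theta_{j})(g\circ\theta_{j}))_{j=1}^{\lambda}$ embeds $H^{s-\lambda;\varphi}(\Gamma)$ as a complemented subspace of $\bigoplus_{j=1}^{\lambda}H^{s-\lambda;\varphi}(\mathbb{R}^{n-1})$, so Propositions~\ref{8prop1} and~\ref{8prop2} reduce the interpolation to the already known $\mathbb{R}^{n-1}$ case. The main obstacle is the uniform construction of the extension operator across the three scales (together with the pseudoconcavity verification for $\psi$); both are standard tools of the Hörmander-space theory and are treated in the authors' monograph~\cite{MikhailetsMurach14}.
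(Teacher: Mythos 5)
The paper offers no proof of Proposition~\ref{8prop4} at all --- it simply refers the reader to \cite{MikhailetsMurach06UMJ3} and \cite{MikhailetsMurach14} --- and your argument is essentially a correct reconstruction of the proofs given there: the pseudoconcavity criterion applied to a regularly varying function of index $(s-s_{0})/(s_{1}-s_{0})\in(0,1)$, the identification of the generating operator of $[H^{s_{0}-\lambda}(\mathbb{R}^{k}),H^{s_{1}-\lambda}(\mathbb{R}^{k})]$ with the Fourier multiplier of symbol $(1+|\xi|^{2})^{(s_{1}-s_{0})/2}$ (which indeed yields equality of norms), and the transfer to $W=G$ and $W=\Gamma$ via a retraction and the flattening/sewing maps together with Propositions~\ref{8prop1} and~\ref{8prop2}. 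The one point to flag is that, since $\lambda\in\mathbb{R}$ is arbitrary, the orders $s_{j}-\lambda$ may be negative, so the extension operator for the domain case must be bounded on negative-order Sobolev spaces as well (a Seeley- or Rychkov-type operator rather than Stein's); this is precisely the tool used in the cited monograph, so the gap is bibliographical rather than mathematical.
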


This result is due to \cite[Theorems 3.1 and 3.5]{MikhailetsMurach06UMJ3}; see also monograph \cite[Theorems 1.14, 2.2, and 3.2]{MikhailetsMurach14} for the cases where $W=\mathbb{R}^{k}$, $W=\Gamma$, and $W=G$ respectively.

\begin{proposition}\label{8prop5}
Let real numbers $s_{0}$, $s$, and $s_{1}$ satisfy the inequalities
$0\leq s_{0}<s<s_{1}$, and let $\varphi\in\mathcal{M}$. Define an interpolation parameter $\psi\in\mathcal{B}$ by formula \eqref{8f16}. Then the equality of spaces
\begin{equation}\label{8f22}
H^{s-\lambda,(s-\lambda)/2;\varphi}(W)=
\bigl[H^{s_{0}-\lambda,(s_{0}-\lambda)/2}(W),
H^{s_{1}-\lambda,(s_{1}-\lambda)/2}(W)\bigr]_{\psi}
\end{equation}
holds true up to equivalence of norms for arbitrary real $\lambda\leq s_{0}$ provided that $W=\Omega$ or $W=S$. If $W=\mathbb{R}^{k}$ with $2\leq k\in\mathbb{Z}$, then \eqref{8f22} holds true with equality of norms in spaces without the assumption that $0\leq s_{0}$.
\end{proposition}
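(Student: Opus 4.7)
The plan is to reduce the assertion on $W=\Omega$ and $W=S$ to the case $W=\mathbb{R}^k$, which can be treated directly via the Fourier multiplier calculus, by applying Proposition~\ref{8prop1} to a suitably constructed extension operator. Before that, I would verify the two prerequisites on $\psi$: first, that $\psi\in\mathcal{B}$ (which follows immediately from the assumptions on $\varphi\in\mathcal{M}$), and second, that $\psi$ is an interpolation parameter. For the latter I would appeal to the pseudoconcavity criterion recalled in Section~\ref{8sec5}: writing $\psi(r)=r^{\theta}\varphi_1(r)$ with $\theta:=(s-s_0)/(s_1-s_0)\in(0,1)$ and $\varphi_1(r):=\varphi(r^{1/(s_1-s_0)})$, one has $\varphi_1$ slowly varying at infinity (slow variation is stable under positive power substitutions), and any product of a positive power $r^{\theta}$, $0<\theta<1$, with a slowly varying factor is pseudoconcave near infinity.

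For $W=\mathbb{R}^k$ I would work directly with the generating operator of the admissible pair $X_0:=H^{s_0-\lambda,(s_0-\lambda)/2}(\mathbb{R}^k)$, $X_1:=H^{s_1-\lambda,(s_1-\lambda)/2}(\mathbb{R}^k)$. This operator is the Fourier multiplier $J$ with symbol $m(\xi',\xi_k):=(1+|\xi'|^2+|\xi_k|)^{(s_1-s_0)/2}\geq1$, so by Spectral Theorem $\psi(J)$ is simply the Fourier multiplier with symbol $\psi(m(\xi))$. A direct substitution using \eqref{8f16} gives
\begin{equation*}
\bigl(1+|\xi'|^2+|\xi_k|\bigr)^{s_0-\lambda}\psi(m(\xi))^{2}
=\bigl(1+|\xi'|^2+|\xi_k|\bigr)^{s-\lambda}\varphi^{2}\!\bigl((1+|\xi'|^2+|\xi_k|)^{1/2}\bigr),
\end{equation*}
which is precisely the weight squared defining $H^{s-\lambda,(s-\lambda)/2;\varphi}(\mathbb{R}^k)$. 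Hence $\|v\|_{X_\psi}=\|\psi(J)v\|_{X_0}=\|v\|_{H^{s-\lambda,(s-\lambda)/2;\varphi}(\mathbb{R}^k)}$, i.e.\ \eqref{8f22} holds with equality of norms. Note that this step works for any $\lambda\in\mathbb{R}$ and any $s_0<s<s_1$ because the $\mathbb{R}^k$ spaces and the multiplier calculus do not require positivity of the indices.

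To pass to $W=\Omega$ or $W=S$, I would invoke Proposition~\ref{8prop1}. Under the hypothesis $\lambda\leq s_0$ both indices $s_0-\lambda\geq0$ and $s_1-\lambda>0$ are nonnegative, and in that range of anisotropic Sobolev scales there exists a common bounded linear extension operator $T$ from $W$ into the ambient $\mathbb{R}^{n+1}$ (respectively $\mathbb{R}^{n}$, via the local charts \eqref{8f-local} on $S$ combined with an extension from the strip $\Pi$): the spatial variable is handled by a Stein-type extension across the smooth boundary $\Gamma$, and the temporal variable by a higher-order reflection (Lions--Slobodetskii / Seeley) across $t=0$ and $t=\tau$. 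The corresponding restriction operator $R$ is bounded and satisfies $RT=I$, so $P:=TR$ is a linear projector acting boundedly on both Sobolev endpoint spaces and with range isomorphic to $H^{s_j-\lambda,(s_j-\lambda)/2}(W)$. Proposition~\ref{8prop1} applied to the pair on the ambient Euclidean space, together with Step~2 applied on that ambient space, then yields the identification
\begin{equation*}
\bigl[H^{s_0-\lambda,(s_0-\lambda)/2}(W),H^{s_1-\lambda,(s_1-\lambda)/2}(W)\bigr]_{\psi}
=H^{s-\lambda,(s-\lambda)/2;\varphi}(W)
\end{equation*}
with equivalence of norms. The main obstacle is precisely the construction of the extension $T$: it must be bounded simultaneously on the two anisotropic Sobolev spaces with different positive smoothness indices, and one must check that the same construction works for the curved boundary $\Gamma$ via the chosen local charts $\theta_j$. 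The constraint $\lambda\leq s_0$ is exactly what keeps us in the range where such a uniform extension is available, and this is the geometric reason why, in contrast to the $\mathbb{R}^k$ case, positivity of $s_0-\lambda$ must be assumed.
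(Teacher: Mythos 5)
Your proposal is essentially the argument that the paper itself delegates to its references: the paper offers no proof of Proposition~\ref{8prop5}, only citations to \cite{Los16JMathSci} and \cite{MikhailetsMurach14}, and those sources proceed exactly as you do --- an explicit computation with the generating operator (a Fourier multiplier) for $W=\mathbb{R}^{k}$, followed by a retraction--coretraction reduction for the other sets $W$. Your $\mathbb{R}^{k}$ step is complete and correct, including the observation that it yields equality of norms with no sign restriction on the indices. Two caveats on the reduction step. First, for $W=S=\Gamma\times(0,\tau)$ there is no ambient Euclidean space in which $S$ sits as an open set, so ``Proposition~\ref{8prop1} applied on the ambient space'' does not literally apply; the correct retraction is the flattening map $g\mapsto((\chi_{1}g)\circ\theta_{1}^{*},\ldots,(\chi_{\lambda}g)\circ\theta_{\lambda}^{*})$ onto $\bigl(H^{\sigma,\sigma/2;\varphi}(\Pi)\bigr)^{\lambda}$ with the sewing operator $K_{1}$ as left inverse, combined with Proposition~\ref{8prop2} and the strip case (itself handled by an extension in $t$ only). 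You gesture at this, but the mechanism is a coretraction onto a direct sum of strip spaces, not an extension into $\mathbb{R}^{n}$. Second, the existence of a single linear extension operator bounded simultaneously on $H^{s_{0}-\lambda,(s_{0}-\lambda)/2}$ and $H^{s_{1}-\lambda,(s_{1}-\lambda)/2}$ over the cylinder $\Omega$ (whose boundary has edges) is the real technical content of the reduction; you correctly identify it as the main obstacle and correctly locate why $\lambda\leq s_{0}$ is needed, but you assert rather than construct it. Neither point is a fatal gap --- both are resolved in the cited literature exactly along the lines you indicate --- so your reconstruction is a faithful, if slightly compressed, version of the intended proof.
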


This result is due to \cite[Theorem~2 and Lemma~1]{Los16JMathSci} for
the cases where $W=S$ and  $W=\mathbb{R}^{k}$ respectively. In the $W=\Omega$ case, the proof of the result is the same as the proof of  its analog for a strip \cite[Lemma~2]{Los16JMathSci}.

\section{Proofs}\label{8sec6}

To deduce Theorems \ref{8th1} and \ref{8th2} from their known counterparts in the Sobolev case, we need to prove a version of Proposition \ref{8prop5} (with $\lambda=0$) for the target spaces of  isomorphisms \eqref{8f8} and \eqref{8f12}. This proof will be based on the following lemma about properties of the operator that assigns the Cauchy data to an arbitrary function $g\in H^{s,s/2;\varphi}(S)$.

\begin{lemma}\label{8lem1}
Choose an integer $r\geq1$, and consider the linear mapping
\begin{equation}\label{8f25}
R:g\mapsto\bigl(g\!\upharpoonright\!\Gamma,
\partial_{t}g\!\upharpoonright\!\Gamma,\dots,
\partial^{r-1}_{t}g\!\upharpoonright\!\Gamma\bigr),
\quad\mbox{with}\quad g\in C^{\infty}(\overline{S}).
\end{equation}
This mapping extends uniquely (by continuity) to a bounded linear operator
\begin{equation}\label{8f65}
R:H^{s,s/2;\varphi}(S)\rightarrow \bigoplus_{k=0}^{r-1}
H^{s-2k-1;\varphi}(\Gamma)=:\mathbb{H}^{s;\varphi}(\Gamma)
\end{equation}
for arbitrary $s>2r-1$ and $\varphi\in\mathcal{M}$. This operator is right invertible; moreover, there exists a linear mapping  $T:(L_2(\Gamma))^r\to L_2(S)$ that
for arbitrary $s>2r-1$ and  $\varphi\in\mathcal{M}$ the restriction of $T$ to the space $\mathbb{H}^{s;\varphi}(\Gamma)$
is a bounded linear operator
\begin{equation}\label{8f43}
T:\mathbb{H}^{s;\varphi}(\Gamma)\to H^{s,s/2;\varphi}(S)
\end{equation}
and that $RTv=v$ for every $v\in\mathbb{H}^{s;\varphi}(\Gamma)$.
\end{lemma}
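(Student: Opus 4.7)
My plan is to prove the lemma first in the Sobolev case $\varphi\equiv1$ by local reduction to a strip and classical Fourier analysis, and then to pass to the full H\"ormander scale by interpolation with a function parameter, invoking Propositions~\ref{8prop2},~\ref{8prop4}, and~\ref{8prop5}.

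The first step is reduction to a model problem on the strip $\Pi=\mathbb{R}^{n-1}\times(0,\tau)$. Using the atlas $\{\theta_{j}\}$ and the partition of unity $\{\chi_{j}\}$ on $\Gamma$, each $g\in H^{s,s/2;\varphi}(S)$ decomposes into local pieces $g_{j}$ on $\Pi$, and the Cauchy-data operator $\partial_{t}^{k}|_{t=0}$ commutes with this decomposition because the cut-offs depend only on the spatial variable. Thus boundedness of $R$ in the Sobolev case reduces to the classical fact that $g\mapsto(g|_{t=0},\partial_{t}g|_{t=0},\dots,\partial_{t}^{r-1}g|_{t=0})$ is bounded from $H^{s,s/2}(\Pi)$ into $\bigoplus_{k=0}^{r-1}H^{s-2k-1}(\mathbb{R}^{n-1})$ for $s>2r-1$, which is the anisotropic trace theorem proved by Plancherel in the joint variables $(x,t)$ (cf.\ \cite[Chapter~II, Theorem~7]{Slobodetskii58}). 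To construct the right inverse, I would define a single strip-level lift $T_{\Pi}\colon(L_{2}(\mathbb{R}^{n-1}))^{r}\to L_{2}(\Pi)$ by a Fourier-multiplier formula
\[
(T_{\Pi}v)(x,t):=\sum_{k=0}^{r-1}\mathcal{F}_{\xi}^{-1}\Bigl[\widehat{v_{k}}(\xi)\,\tfrac{t^{k}}{k!}\,\eta\bigl(t\,(1+|\xi|^{2})^{1/2}\bigr)\Bigr](x),
\]
with $\eta\in C_{0}^{\infty}([0,\infty))$ equal to $1$ near the origin (and a harmless cut-off in $t$ near $t=\tau$). Differentiating under the integral yields $\partial_{t}^{j}(T_{\Pi}v)|_{t=0}=v_{j}$ for $0\le j\le r-1$, and Plancherel estimates in the dual variables $(\xi,\tau)$ give boundedness of $T_{\Pi}$ from $\bigoplus_{k=0}^{r-1}H^{s-2k-1}(\mathbb{R}^{n-1})$ into $H^{s,s/2}(\Pi)$ for every $s>2r-1$. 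The operator $T$ on $S$ is then assembled from $T_{\Pi}$ by the standard patching via $\{\theta_{j}^{*}\}$ and $\{\chi_{j}\}$, producing a single $L_{2}$-bounded map $(L_{2}(\Gamma))^{r}\to L_{2}(S)$ whose Sobolev-scale boundedness and right-invertibility of $R$ follow at once.

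To pass to the H\"ormander scale, I would fix $s_{0},s_{1}$ with $2r-1<s_{0}<s<s_{1}$ and take $\psi\in\mathcal{B}$ as the interpolation parameter attached to $(s_{0},s,s_{1},\varphi)$ by formula~\eqref{8f16}. Proposition~\ref{8prop5} with $\lambda=0$ gives $H^{s,s/2;\varphi}(S)=[H^{s_{0},s_{0}/2}(S),H^{s_{1},s_{1}/2}(S)]_{\psi}$; Proposition~\ref{8prop4} applied for each $k$ with $\lambda=2k+1$ gives $H^{s-2k-1;\varphi}(\Gamma)=[H^{s_{0}-2k-1}(\Gamma),H^{s_{1}-2k-1}(\Gamma)]_{\psi}$; and Proposition~\ref{8prop2} transfers these identifications to the orthogonal sum $\mathbb{H}^{s;\varphi}(\Gamma)$. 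Since both $R$ and $T$ are bounded between the Sobolev endpoint pairs by the previous step, the interpolation property of $\psi$ yields the boundedness in~\eqref{8f65} and~\eqref{8f43}, while the identity $RTv=v$ is first verified on smooth vectors and extended by continuity.

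The main obstacle is exhibiting the right inverse as a \emph{single} operator $T\colon(L_{2}(\Gamma))^{r}\to L_{2}(S)$ whose restrictions are bounded on every H\"ormander pair simultaneously; this forces the strip-level lift $T_{\Pi}$ to be constructed once and for all, with norm estimates uniform across the Sobolev scale $\{s>2r-1\}$ so that any choice of endpoints $(s_{0},s_{1})$ works in the interpolation argument. Once the strip-level estimates are in hand, the gluing via the atlas and the subsequent interpolation step are essentially mechanical.
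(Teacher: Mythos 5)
Your overall architecture coincides with the paper's: an explicit Fourier-multiplier lift in the spirit of H\"ormander's proof of his Theorem~2.5.7, combined with the local charts on $S$ and interpolation with the function parameter \eqref{8f16} via Propositions~\ref{8prop2}, \ref{8prop4}, and~\ref{8prop5}. (The paper works on $\mathbb{R}^{n}$ first and then restricts to the strip, and it verifies the Sobolev boundedness of the lift only for even integer orders $s=2m$, recovering the rest of the scale by interpolation; these are cosmetic differences from your outline.)

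However, your formula for the strip-level lift contains a genuine error: the cut-off must be scaled parabolically, $\eta\bigl(t\,(1+|\xi|^{2})\bigr)$, not isotropically, $\eta\bigl(t\,(1+|\xi|^{2})^{1/2}\bigr)$ as you wrote. With your scaling the operator $T_{\Pi}$ is \emph{not} bounded from $\bigoplus_{k}H^{s-2k-1}(\mathbb{R}^{n-1})$ into $H^{s,s/2}(\Pi)$. Indeed, already for $r=1$, $k=0$, $s=2m$, test on $v_{0}$ with $\widehat{v_{0}}$ concentrated near $|\xi|\sim\rho$: since $\int_{\mathbb{R}}|\eta(t\langle\xi\rangle)|^{2}\,dt\sim\langle\xi\rangle^{-1}$, one gets
$$
\|\partial_{x_{j}}^{2m}T_{\Pi}v\|_{L_{2}}^{2}\sim\rho^{4m-1}\,\|\widehat{v_{0}}\|_{L_{2}}^{2},
\qquad
\|v_{0}\|_{H^{2m-1}(\mathbb{R}^{n-1})}^{2}\sim\rho^{4m-2}\,\|\widehat{v_{0}}\|_{L_{2}}^{2},
$$
so the required estimate fails by a factor $\rho$ (and by $\rho^{2k+1}$ for general $k$); your isotropic scaling is the one appropriate for traces in $H^{s-k-1/2}$, not $H^{s-2k-1}$. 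The point is that in the anisotropic space $H^{s,s/2}$ the time variable must live at the parabolic scale $t\sim\langle\xi\rangle^{-2}$: with the cut-off $\beta(\langle\xi\rangle^{2}t)$ of the paper's formula \eqref{8f58-def}, the change of variables $\tau=\langle\xi\rangle^{2}t$ produces exactly the weight $\langle\xi\rangle^{4m-4k-2}$ matching the $H^{2m-2k-1}$ norm of $v_{k}$, for the pure $x$-derivative term and the pure $t$-derivative term alike. With this one correction, the rest of your argument (the identity $\partial_{t}^{j}T_{\Pi}v|_{t=0}=v_{j}$, the patching through the flattening and sewing maps attached to $\{\theta_{j}\}$ and $\{\chi_{j}\}$, and the interpolation step) goes through as you describe and reproduces the paper's proof.
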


\begin{proof}
We first prove an analog of this lemma for H\"ormander spaces
defined on $\mathbb{R}^{n}$ and $\mathbb{R}^{n-1}$ instead of $S$ and $\Gamma$. Then we deduce the lemma with the help of the special local charts on~$S$.

Consider the linear mapping
\begin{equation}\label{8f60}
R_{0}:w\mapsto\bigl(w\!\mid_{t=0},\,
\partial_{t}w\!\mid_{t=0},\dots,
\partial^{r-1}_{t}w\!\mid_{t=0}\bigr),\quad\mbox{with}
\quad w\in C_{0}^{\infty}(\mathbb{R}^{n}).
\end{equation}
Here, we interpret $w$ as a function $w(x,t)$ of $x\in\mathbb{R}^{n-1}$ and $t\in\mathbb{R}$ so that $R_{0}w\in(C_{0}^{\infty}(\mathbb{R}^{n-1}))^{r}$.
Choose $s>2r-1$ and $\varphi\in\mathcal{M}$ arbitrarily, and prove that  the mapping \eqref{8f60} extends uniquely (by continuity) to a bounded linear operator
\begin{equation}\label{8f63}
R_{0}:H^{s,s/2;\varphi}(\mathbb{R}^{n})\rightarrow \bigoplus_{k=0}^{r-1}
H^{s-2k-1;\varphi}(\mathbb{R}^{n-1})=:
\mathbb{H}^{s;\varphi}(\mathbb{R}^{n-1}).
\end{equation}
This fact is known in the Sobolev case of $\varphi\equiv1$ due to \cite[Chapter~II, Theorem 7]{Slobodetskii58}. Using the interpolation with a function parameter between Sobolev spaces, we can deduce this fact in the general situation of arbitrary $\varphi\in\mathcal{M}$.

Namely, choose $s_0,s_1\in\mathbb{R}$ such that $2r-1<s_0<s<s_1$ and
consider the bounded linear operators
\begin{equation}\label{8f63-sob}
R_{0}:H^{s_j,s_j/2}(\mathbb{R}^{n})\to\mathbb{H}^{s_j}(\mathbb{R}^{n-1}),
\quad\mbox{with}\quad j\in\{0,1\}.
\end{equation}
Let $\psi$ be the interpolation parameter \eqref{8f16}. Then the restriction of the mapping \eqref{8f63-sob} with $j=0$ to the space
\begin{equation}\label{8f-intH}
\bigl[H^{s_0,s_{0}/2}(\mathbb{R}^{n}),
H^{s_1,s_{1}/2}(\mathbb{R}^{n})\bigr]_{\psi}=
H^{s,s/2;\varphi}(\mathbb{R}^{n})
\end{equation}
is a bounded operator
\begin{equation}\label{8f27}
R_{0}:H^{s,s/2;\varphi}(\mathbb{R}^{n})\to
\bigl[\mathbb{H}^{s_0}(\mathbb{R}^{n-1}),
\mathbb{H}^{s_1}(\mathbb{R}^{n-1})\bigr]_{\psi}.
\end{equation}
The latter equality is due to Proposition~\ref{8prop5}. This operator is an extension by continuity of the mapping \eqref{8f60} because the set
$C_{0}^{\infty}(\mathbb{R}^{n})$ is dense in $H^{s,s/2;\varphi}(\mathbb{R}^{n})$. Owing to Propositions \ref{8prop2} and \ref{8prop4}, we get
\begin{equation}\label{8f-intHH}
\begin{aligned}
\bigl[\mathbb{H}^{s_0}(\mathbb{R}^{n-1}),
\mathbb{H}^{s_1}(\mathbb{R}^{n-1})\bigr]_{\psi}&=
\bigoplus_{k=0}^{r-1}
\bigl[H^{s_0-2k-1}(\mathbb{R}^{n-1}),
H^{s_1-2k-1}(\mathbb{R}^{n-1})\bigr]_{\psi}\\
&=\bigoplus_{k=0}^{r-1}H^{s-2k-1;\varphi}(\mathbb{R}^{n-1})=
\mathbb{H}^{s;\varphi}(\mathbb{R}^{n-1}).
\end{aligned}
\end{equation}
Hence, the linear bounded operator \eqref{8f27} is the required operator \eqref{8f63}.

Let us now build a linear mapping
\begin{equation}\label{8f58}
T_0:\bigl(L_{2}(\mathbb{R}^{n-1})\bigr)^r\to L_{2}(\mathbb{R}^{n})
\end{equation}
that its restriction to each space $\mathbb{H}^{s;\varphi}(\mathbb{R}^{n-1})$ with $s>2r-1$ and $\varphi\in\mathcal{M}$ is a bounded operator between the spaces $\mathbb{H}^{s;\varphi}(\mathbb{R}^{n-1})$ and $H^{s,s/2;\varphi}(\mathbb{R}^{n})$ and that this operator is right inverse to \eqref{8f63}.

Similarly to H\"ormander \cite[Proof of Theorem~2.5.7]{Hermander63} we define the linear mapping
\begin{equation}\label{8f58-def}
T_0:v\mapsto F_{\xi\mapsto x}^{-1}\biggl[
\beta\bigl(\langle\xi\rangle^{2}t\bigr)\,\sum_{k=0}^{r-1}
\frac{1}{k!}\,\widehat{v_k}(\xi)\times t^k\biggr](x,t)
\end{equation}
on the linear topological space of vectors
$$
v:=(v_0,\dots,v_{r-1})\in\bigl(\mathcal{S}'(\mathbb{R}^{n-1})\bigr)^r.
$$
We consider $T_{0}v$ as a distribution on the Euclidean space $\mathbb{R}^{n}$ of points $(x,t)$, with $x=(x_{1},\ldots,x_{n-1})\in\mathbb{R}^{n-1}$ and $t\in\mathbb{R}$. In \eqref{8f58-def}, the function $\beta\in C^{\infty}_{0}(\mathbb{R})$ is chosen so that $\beta=1$ in a certain neighbourhood of zero. As usual, $F_{\xi\mapsto x}^{-1}$ denotes the inverse Fourier transform with respect to $\xi=(\xi_{1},\ldots,\xi_{n-1})\in\mathbb{R}^{n-1}$, and $\langle\xi\rangle:=(1+|\xi|^2)^{1/2}$. The variable $\xi$ is dual to $x$ relative to the direct Fourier transform $\widehat{w}(\xi)=(Fw)(\xi)$ of a function $w(x)$.

Obviously, the mapping \eqref{8f58-def} is well defined and acts continuously between $(\mathcal{S}'(\mathbb{R}^{n-1})^r$ and $\mathcal{S}'(\mathbb{R}^{n})$. It is also evident that the restriction of this mapping to the space $(L_{2}(\mathbb{R}^{n-1}))^r$ is a bounded operator between $(L_{2}(\mathbb{R}^{n-1}))^r$ and $L_{2}(\mathbb{R}^{n})$.

We assert that
\begin{equation}\label{8f57}
R_{0}T_{0}v=v \quad\mbox{for every}\quad
v\in\bigl(\mathcal{S}(\mathbb{R}^{n-1})\bigr)^r.
\end{equation}
Here, as usual, $\mathcal{S}(\mathbb{R}^{n-1})$ denotes the linear topological space of all rapidly decreasing infinitely smooth functions on $\mathbb{R}^{n-1}$. Since $v\in(\mathcal{S}(\mathbb{R}^{n-1})^r$ implies $T_{0}v\in\mathcal{S}(\mathbb{R}^{n-1})$, the left-hand side of the equality \eqref{8f57} is well defined. Let us prove this equality.

Choosing $j\in\{0,\dots,r-1\}$ and
$v=(v_0,\dots,v_{r-1})\in(S(\mathbb{R}^{n-1}))^r$
arbitrarily, we get
\begin{align*}
F\bigl[\partial^j_tT_0v\!\mid_{t=0}\bigr](\xi)&=
\partial^j_{t}F_{x\mapsto\xi}[T_0v](\xi,t)\big|_{t=0}=
\partial^j_t\biggl(
\beta\bigl(\langle\xi\rangle^{2}t\bigr)\,\sum_{k=0}^{r-1}
\frac{1}{k!}\,\widehat{v_k}(\xi)\,t^k\biggr)\bigg|_{t=0}\\
&=\beta(0)\biggl(\partial^j_t\sum_{k=0}^{r-1}
\frac{1}{k!}\,\widehat{v_k}(\xi)\,t^k\biggr)\!
\bigg|_{t=0}=\beta(0)\,j!\,\frac{1}{j!}\,\widehat{v_j}(\xi)=
\widehat{v_j}(\xi)
\end{align*}
for every $\xi\in\mathbb{R}^{n-1}$. In the fourth equality, we have used the fact that $\beta=1$ in a neighbourhood of zero.
Thus, the Fourier transforms of all components of the vectors $R_{0}T_{0}v$ and $v$ coincide, which is equivalent to \eqref{8f57}.

Let us now prove that the restriction of the mapping \eqref{8f58-def} to each space
\begin{equation}\label{8f-doubleSobolev}
\mathbb{H}^{2m}(\mathbb{R}^{n-1})=
\bigoplus_{k=0}^{r-1}H^{2m-2k-1}(\mathbb{R}^{n-1})
\end{equation}
with $0\leq m\in\mathbb{Z}$ is a bounded operator between $\mathbb{H}^{2m}(\mathbb{R}^{n-1})$ and $H^{2m,m}(\mathbb{R}^{n})$.
Note that the integers $2m-2k-1$ may be negative in \eqref{8f-doubleSobolev}.

Let an integer $m\geq0$. We make use of the fact that the norm in the space $H^{2m,m}(\mathbb{R}^{n})$ is equivalent to the norm
\begin{equation*}
\|w\|_{2m,m}:=\biggl(\|w\|^2+
\sum_{j=1}^{n-1}\|\partial_{x_j}^{2m}w\|^2+
\|\partial_{t}^{m}w\|^2\biggr)^{1/2}
\end{equation*}
(see, e.g., \cite[Section~9.1]{BesovIlinNikolskii75}). Here and below in this proof, $\|\cdot\|$ stands for the norm in the Hilbert space $L_2(\mathbb{R}^{n})$. Of course, $\partial_{x_j}u$ and $\partial_{t}$ denote the operators of generalized partial derivatives with respect to $x_j$ and $t$ respectively. Choosing $v=(v_0,\dots,v_{r-1})\in(\mathcal{S}(\mathbb{R}^{n-1}))^r$ arbitrarily and using the Parseval equality, we obtain the following:
\begin{align*}
\|T_0v\|^2_{2m,m}&=\|T_0v\|^2+
\sum_{j=1}^{n-1}\|\partial_{x_j}^{2m}\,T_0v\|^2+
\|\partial_{t}^{m}\,T_0v\|^2\\
&=\|\widehat{T_0v}\|^2+
\sum_{j=1}^{n-1}\|\xi_j^{2m}\,\widehat{T_0v}\|^2+
\|\partial_{t}^{m}\,\widehat{T_0v}\|^2\\
&\leq\sum_{k=0}^{r-1}\frac{1}{k!}\,
\int\limits_{\mathbb{R}^{n}}
\bigl|\beta(\langle\xi\rangle^{2}t)\,\widehat{v_k}(\xi)\,t^k\bigr|^2
d\xi dt\\
&+\sum_{j=1}^{n-1}\sum_{k=0}^{r-1}\frac{1}{k!}\,
\int\limits_{\mathbb{R}^{n}}
\bigl|\xi_j^{2m}\,\beta(\langle\xi\rangle^{2}t)\,\widehat{v_k}(\xi)\,
t^k\bigr|^2d\xi dt\\
&+\sum_{k=0}^{r-1}\frac{1}{k!}\,
\int\limits_{\mathbb{R}^{n}}
\bigl|\partial_{t}^{m}\bigl(\beta(\langle\xi\rangle^{2}t)\,t^k\bigr)\,
\widehat{v_k}(\xi)\bigr|^2d\xi dt.
\end{align*}

Let us estimate each of these three integrals separately. We begin with the third integral. Changing the variable $\tau=\langle\xi\rangle^{2}t$ in the interior integral with respect to $t$, we get the equalities
\begin{align*}
\int\limits_{\mathbb{R}^{n}}
\bigl|\partial_{t}^{m}\bigl(\beta(\langle\xi\rangle^{2}t)\,t^k\bigr)\,
\widehat{v_k}(\xi)\bigr|^2d\xi dt
&=\int\limits_{\mathbb{R}^{n-1}}
|\widehat{v_k}(\xi)|^2d\xi \int\limits_{\mathbb{R}}
|\partial_{t}^{m}(\beta(\langle\xi\rangle^{2}t)t^k)|^2 dt\\
&=\int\limits_{\mathbb{R}^{n-1}}
\langle\xi\rangle^{4m-4k-2}\,|\widehat{v_k}(\xi)|^2 d\xi \int\limits_{\mathbb{R}}
|\partial_{\tau}^{m}(\beta(\tau)\tau^{k})|^2 d\tau.
\end{align*}
Hence,
$$
\int\limits_{\mathbb{R}^{n}}
\bigl|\partial_{t}^{m}\bigl(\beta(\langle\xi\rangle^{2}t)\,t^k\bigr)\,
\widehat{v_k}(\xi)\bigr|^2d\xi dt=
c_1\,\|v_k\|^2_{H^{2m-2k-1}(\mathbb{R}^{n-1})},
$$
with
$$
c_1:=\int\limits_{\mathbb{R}}
|\partial_{\tau}^{m}(\beta(\tau)\tau^{k})|^2 d\tau<\infty.
$$

Using the same changing of the variable $t$ in the second integral, we obtain the following:
\begin{align*}
\int\limits_{\mathbb{R}^{n}}
\bigl|\xi_j^{2m}\,\beta(\langle\xi\rangle^{2}t)\,\widehat{v_k}(\xi)\,
t^k\bigr|^2d\xi dt
&=\int\limits_{\mathbb{R}^{n-1}}
|\xi_j|^{4m}|\widehat{v_k}(\xi)|^2d\xi \int\limits_{\mathbb{R}}
|t^{k}\,\beta(\langle\xi\rangle^{2}t)|^2dt\\
&=\int\limits_{\mathbb{R}^{n-1}}
|\xi_j|^{4m}\langle\xi\rangle^{-4k-2}\,|\widehat{v_k}(\xi)|^2d\xi \int\limits_{\mathbb{R}}|\tau^{k}\beta(\tau)|^2d\tau\\
&\leq\int\limits_{\mathbb{R}^{n-1}}
\langle\xi\rangle^{4m-4k-2}\,|\widehat{v_k}(\xi)|^2d\xi
\int\limits_{\mathbb{R}}|\tau^{k}\beta(\tau)|^2d\tau.
\end{align*}
Hence,
$$
\int\limits_{\mathbb{R}^{n}}
\bigl|\xi_j^{2m}\,\beta(\langle\xi\rangle^{2}t)\,\widehat{v_k}(\xi)\,
t^k\bigr|^2d\xi dt
\leq c_2\,\|v_k\|^2_{H^{2m-2k-1}(\mathbb{R}^{n-1})},
$$
with
$$
c_2:=\int\limits_{\mathbb{R}}|\tau^{k}\beta(\tau)|^2d\tau<\infty.
$$

Finally, replacing the symbol $\xi_j$ with $1$ in the previous reasoning,
we obtain the following estimate for the first integral:
$$
\int\limits_{\mathbb{R}^{n}}
\bigl|\beta(\langle\xi\rangle^{2}t)\,\widehat{v_k}(\xi)\,
t^k\bigr|^2d\xi dt
\leq c_2\,\|v_k\|^2_{H^{-2k-1}(\mathbb{R}^{n-1})}
\leq c_2\,\|v_k\|^2_{H^{2m-2k-1}(\mathbb{R}^{n-1})}.
$$

Thus, we conclude that
\begin{equation*}
\|T_0v\|_{H^{2m,m}(\mathbb{R}^{n})}^{2}\leq c\,\sum_{k=0}^{r-1}
\|v_k\|^2_{H^{2m-2k-1}(\mathbb{R}^{n-1})}=
c\,\|v\|_{\mathbb{H}^{2m}(\mathbb{R}^{n-1})}^{2}
\end{equation*}
for any $v\in(\mathcal{S}(\mathbb{R}^{n-1}))^r$, with the number $c>0$ being independent of $v$. Since the set $\bigl(S(\mathbb{R}^{n-1})\bigr)^r$ is dense in $\mathbb{H}^{2m}(\mathbb{R}^{n-1})$, it follows from the latter estimate that the mapping \eqref{8f58-def} sets a bounded linear operator
\begin{equation*}
T_0:\mathbb{H}^{2m}(\mathbb{R}^{n-1})\to H^{2m,m}(\mathbb{R}^{n})
\quad\mbox{whenever}\quad 0\leq m\in\mathbb{Z}.
\end{equation*}

Let us deduce from this fact that the mapping \eqref{8f58-def} acts continuously between the spaces $\mathbb{H}^{s;\varphi}(\mathbb{R}^{n-1})$ and $H^{s,s/2;\varphi}(\mathbb{R}^{n})$ for every $s>2r-1$ and $\varphi\in\mathcal{M}$. Put $s_0=0$, choose an even integer $s_1>s$, and consider the linear bounded operators
\begin{equation}\label{8f66}
T_{0}:\mathbb{H}^{s_j}(\mathbb{R}^{n-1})\to H^{s_j,s_j/2}(\mathbb{R}^{n}),
\quad\mbox{with}\quad j\in\{0,1\}.
\end{equation}
Let, as above, $\psi$ be the interpolation parameter \eqref{8f16}. Then the restriction of the mapping \eqref{8f66} with $j=0$ to the space
\begin{equation*}
\bigl[\mathbb{H}^{s_0}(\mathbb{R}^{n-1}),
\mathbb{H}^{s_1}(\mathbb{R}^{n-1})\bigr]_{\psi}=
\mathbb{H}^{s;\varphi}(\mathbb{R}^{n-1})
\end{equation*}
is a bounded operator
\begin{equation}\label{8f48}
T_0:\mathbb{H}^{s;\varphi}(\mathbb{R}^{n-1})\to
H^{s,s/2;\varphi}(\mathbb{R}^{n}).
\end{equation}
Here, we have used formulas \eqref{8f-intH} and \eqref{8f-intHH}, which remain true for the considered $s_{0}$ and $s_{1}$.

Now the equality \eqref{8f57} extends by continuity over all vectors
$v\in\mathbb{H}^{s;\varphi}(\mathbb{R}^{n-1})$. Hence, the operator \eqref{8f48} is right inverse to \eqref{8f63}. Thus, the required mapping \eqref{8f58} is built.

We need to introduce analogs of the operators \eqref{8f63} and \eqref{8f48} for the strip
$$
\Pi=\bigl\{(x,t):x\in\mathbb{R}^{n-1},0<t<\tau\bigr\}.
$$
Let $s>2r-1$ and $\varphi\in\mathcal{M}$. Given $u\in H^{s,s/2;\varphi}(\Pi)$, we put
$R_{1}u:=R_{0}w$, where a function
$w\in H^{s,s/2;\varphi}(\mathbb{R}^{n})$ satisfies the condition
$w\!\upharpoonright\!\Pi=u$. Evidently, this definition does not depend on the choice of $w$. The linear mapping $u\mapsto R_{1}u$ is a bounded  operator
\begin{equation}\label{8f67}
R_{1}:H^{s,s/2;\varphi}(\Pi)\to\mathbb{H}^{s;\varphi}(\mathbb{R}^{n-1}).
\end{equation}
This follows immediately from the boundedness of the operator \eqref{8f63} and from the definition of the norm in  $H^{s,s/2;\varphi}(\Pi)$.

Let us introduce a right-inverse of \eqref{8f67} on the base of the mapping \eqref{8f58-def}. We put $T_{1}v:=(T_0v)\!\upharpoonright\!\Pi$ for arbitrary $v\in(L_{2}(\mathbb{R}^{n-1}))^{r}$. The restriction of the linear mapping $v\mapsto T_{1}v$ over vectors $v\in\mathbb{H}^{s;\varphi}(\mathbb{R}^{n-1})$ is a bounded operator
\begin{equation}\label{8f51}
T_1:\mathbb{H}^{s;\varphi}(\mathbb{R}^{n-1})\to H^{s,s/2;\varphi}(\Pi).
\end{equation}
This follows directly from the boundedness of the operator \eqref{8f48}. Observe that
$$
R_1T_1v=R_1\bigl((T_0v)\!\upharpoonright\!\Pi\bigr)=R_0T_0v=v
\quad\mbox{for every}\quad v\in\mathbb{H}^{s;\varphi}(\mathbb{R}^{n-1}).
$$
Thus, the operator \eqref{8f51} is right inverse to \eqref{8f67}.

Using operators \eqref{8f67} and \eqref{8f51}, we can now prove our lemma with the help of the special local charts \eqref{8f-local} on $S$. As above, let $s>2r-1$ and $\varphi\in\mathcal{M}$. Choosing $k\in\{0,\dots,r-1\}$ and $g\in C^{\infty}(\overline{S})$ arbitrarily, we get the following:
\begin{align*}
\|\partial^{k}_{t}g\!\upharpoonright\!\Gamma\|_
{H^{s-2k-1;\varphi}(\Gamma)}^{2}&=
\sum_{j=1}^{\lambda}
\|\bigl(\chi_{j}(\partial^{k}_{t}g\!\upharpoonright\!\Gamma)\bigr)
\circ\theta_{j}\|_{H^{s-2k-1;\varphi}(\mathbb{R}^{n-1})}^{2}\\
&=\sum_{j=1}^{\lambda}
\|\partial^{k}_{t}\bigl((\chi_{j}\,g)\circ\theta^{\ast}_{j}\bigr)
\!\upharpoonright\!\mathbb{R}^{n-1})\|_
{H^{s-2k-1;\varphi}(\mathbb{R}^{n-1})}^{2}\\
&\leq c^{2}\,\sum_{j=1}^{\lambda}
\|(\chi_{j}\,g)\circ\theta^{\ast}_{j}\|_{H^{s,s/2;\varphi}(\Pi)}^{2}=
c^{2}\,\|g\|_{H^{s,s/2;\varphi}(S)}^{2}.
\end{align*}
Here, $c$ denotes the norm of the bounded operator \eqref{8f67}, and, as usual, symbol "$\circ$" designates a composition of functions. Recall that $\{\theta_{j}\}$ is a collection of local charts on $\Gamma$ and that $\{\chi_{j}\}$ is an infinitely smooth partition of unity on $\Gamma$. Thus,
\begin{equation*}
\|Rg\|_{\mathbb{H}^{s;\varphi}(\Gamma)}\leq c\,\sqrt{r}\,\|g\|_{H^{s,s/2;\varphi}(S)}
\quad\mbox{for every}\quad g\in C^{\infty}(\overline{S}).
\end{equation*}
This implies that the mapping \eqref{8f25} extends by continuity to the bounded linear operator \eqref{8f65}.

Let us build the linear mapping $T:(L_2(\Gamma))^r\to L_2(S)$ whose restriction to $\mathbb{H}^{s;\varphi}(\Gamma)$ is a right-inverse of \eqref{8f65}. Consider the linear mapping of flattening of $\Gamma$
\begin{equation*}
L:v\mapsto\bigl((\chi_{1}v)\circ\theta_{1},\ldots,
(\chi_{\lambda}v)\circ\theta_{\lambda}\bigr),
\quad\mbox{with}\quad v\in L_2(\Gamma).
\end{equation*}
Its restriction to $H^{\sigma;\varphi}(\Gamma)$ is an isometric operator
\begin{equation}\label{8f52}
L:H^{\sigma;\varphi}(\Gamma)\rightarrow
\bigl(H^{\sigma;\varphi}(\mathbb{R}^{n-1})\bigr)^{\lambda}
\quad\mbox{whenever}\quad\sigma>0.
\end{equation}
Besides, consider the linear mapping of sewing of $\Gamma$
\begin{equation*}
K:(h_{1},\ldots,h_{\lambda})\mapsto\sum_{j=1}^{\lambda}\,
O_{j}\bigl((\eta_{j}h_{j})\circ\theta_{j}^{-1}\bigr),
\quad\mbox{with}\quad h_{1},\ldots,h_{\lambda}\in L_2(\mathbb{R}^{n-1}).
\end{equation*}
Here, each function $\eta_{j}\in
C_{0}^{\infty}(\mathbb{R}^{n-1})$ is chosen so that $\eta_{j}=1$ on the
set $\theta^{-1}_{j}(\mathrm{supp}\,\chi_{j})$, whereas $O_{j}$
denotes the operator of the extension by zero to $\Gamma$ of a function given on $\Gamma_j$. The restriction of this mapping to $(H^{\sigma;\varphi}(\mathbb{R}^{n-1}))^{\lambda}$ is a bounded  operator
\begin{equation*}
K:\bigl(H^{\sigma;\varphi}(\mathbb{R}^{n-1})\bigr)^{\lambda}\to
H^{\sigma;\varphi}(\Gamma)\quad\mbox{whenever}\quad\sigma>0,
\end{equation*}
and this operator is left inverse to \eqref{8f52} (see \cite[the proof of Theorem~2.2]{MikhailetsMurach14}).

The mapping $K$ induces the operator $K_{1}$ of the sewing of the manifold $S=\Gamma\times(0,\tau)$ by the formula
\begin{equation*}
\bigl(K_1(g_1,\dots,g_\lambda)\bigr)(x,t):=
\bigl(K(g_1(\cdot,t),\ldots,g_\lambda(\cdot,t))\bigr)(x)
\end{equation*}
for arbitrary functions $g_1,\dots,g_\lambda\in L_2(\Pi)$ and almost all $x\in\Gamma$ and $t\in(0,\tau)$. The restriction of the mapping $K_{1}$ to $(H^{\sigma,\sigma/2;\varphi}(\Pi))^{\lambda}$ is a bounded  operator
\begin{equation}\label{8f53}
K_{1}:(H^{\sigma,\sigma/2;\varphi}(\Pi))^{\lambda}\to
H^{\sigma,\sigma/2;\varphi}(S)\quad\mbox{whenever}\quad\sigma>0
\end{equation}
(see \cite[the proof of Theorem~2]{Los16JMathSci}).

Given $v:=(v_0,v_1,\dots,v_{r-1})\in(L_{2}(\Gamma))^{r}$, we set
\begin{equation*}
Tv:=K_1\bigl(T_1(v_{0,1},\ldots,v_{r-1,1}),\ldots,
T_1(v_{0,\lambda},\ldots,v_{r-1,\lambda})\bigr),
\end{equation*}
where
$$
(v_{k,1},\ldots,v_{k,\lambda}):=
Lv_{k}\in(L_{2}(\mathbb{R}^{n-1}))^{\lambda}
$$
for each integer $k\in\{0,\ldots,r-1\}$. The linear mapping $v\mapsto Tv$ acts continuously between $(L_{2}(\Gamma))^{r}$ and $L_{2}(S)$, which follows directly from the definitions of $L$, $T_1$, and~$K_1$.
The restriction of this mapping to $\mathbb{H}^{s;\varphi}(\Gamma)$ is the bounded operator \eqref{8f43}. This follows immediately from the boundedness of the operators \eqref{8f51}, \eqref{8f52}, and \eqref{8f53}. The operator \eqref{8f43} is right inverse to \eqref{8f65}. Indeed, choosing a vector $v=(v_0,v_1,\dots,v_{r-1})\in\mathbb{H}^{s;\varphi}(\Gamma)$ arbitrarily, we obtain the following equalities:
\begin{align*}
(RTv)_k&=\bigl(RK_1\bigl(T_1(v_{0,1},\ldots,v_{r-1,1}),\ldots,
T_1(v_{0,\lambda},\ldots,v_{r-1,\lambda})\bigr)\bigr)_k\\
&=K\bigl(\bigl(R_1T_1(v_{0,1},\ldots,v_{r-1,1})\bigr)_k,\ldots,
\bigl(R_1T_1(v_{0,\lambda},\ldots,v_{r-1,\lambda})\bigr)_k\bigr)\\
&=K(v_{k,1},\dots,v_{k,\lambda})=KLv_k=v_k.
\end{align*}
Here, the index $k$ runs over the set $\{0,\dots,r-1\}$ and denotes the $k$-th component of a vector. Hence, $RTv=v$.
\end{proof}

Using this lemma, we will now prove a version of Proposition \ref{8prop5} for the target spaces of  isomorphisms \eqref{8f8} and \eqref{8f12}. Note that the number of the compatibility conditions \eqref{8f10} and \eqref{8f14} are constant respectively on the intervals
$$
J_{0,1}:=(2,\,7/2),\quad J_{0,r}:=(2r-1/2,\,2r+3/2),\;\;\mbox{with}\;\; 2\leq r\in\mathbb{Z},
$$
and
$$
J_{1,0}:=(2,5/2),\quad J_{1,r}:=(2r+1/2,\,2r+5/2),\;\;\mbox{with}\;\; 1\leq r\in\mathbb{Z},
$$
of the varying of $s$. Namely, if $s$ ranges over some $J_{l,r}$, then this number equals $r$.

\begin{lemma}\label{8lem2}
Let $l\in\{0,\,1\}$ and $1\leq r\in\mathbb{Z}$. Suppose that real numbers $s_0,s,s_1\in J_{l,r}$ satisfy the inequality $s_0<s<s_1$ and that $\varphi\in\mathcal{M}$. Define an interpolation parameter $\psi\in\mathcal{B}$ by formula \eqref{8f16}. Then the equality of spaces
\begin{equation}\label{8f28}
\mathcal{Q}_{l}^{s-2,s/2-1;\varphi}=
\bigl[\mathcal{Q}_{l}^{s_0-2,s_{0}/2-1},
\mathcal{Q}_{l}^{s_1-2,s_{1}/2-1}\bigr]_{\psi}
\end{equation}
holds true up to equivalence of norms.
\end{lemma}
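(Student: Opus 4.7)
The plan is to invoke Proposition~\ref{8prop1} with the admissible Sobolev pair
$X:=[\mathcal{H}_{l}^{s_{0}-2,s_{0}/2-1},\mathcal{H}_{l}^{s_{1}-2,s_{1}/2-1}]$
and the subspace $Y_{0}:=\mathcal{Q}_{l}^{s_{0}-2,s_{0}/2-1}$ of $\mathcal{H}_{l}^{s_{0}-2,s_{0}/2-1}$, and then to identify $X_{\psi}$ with the H\"ormander direct sum $\mathcal{H}_{l}^{s-2,s/2-1;\varphi}$. The hypothesis $s_{0},s,s_{1}\in J_{l,r}$ is crucial: it guarantees that the number of compatibility equations \eqref{8f10} (if $l=0$) or \eqref{8f14} (if $l=1$) equals the same integer $r$ at all three levels, that each such equation is already well defined at the coarsest Sobolev level $\mathcal{H}_{l}^{s_{0}-2,s_{0}/2-1}$, and it immediately gives $\mathcal{H}_{l}^{s_{1}-2,s_{1}/2-1}\cap Y_{0}=\mathcal{Q}_{l}^{s_{1}-2,s_{1}/2-1}$.

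The core step is to produce a projector $P$ acting boundedly on each $\mathcal{H}_{l}^{s_{j}-2,s_{j}/2-1}$ whose range is $\mathcal{Q}_{l}^{s_{j}-2,s_{j}/2-1}$. I exploit Lemma~\ref{8lem1}. Given a triple $(f,g,h)$, define the vector of required traces by
\begin{equation*}
c_{0}(f,h):=(v_{0}\!\upharpoonright\!\Gamma,\ldots,v_{r-1}\!\upharpoonright\!\Gamma),\qquad c_{1}(f,h):=(B_{0}[v_{0}],\ldots,B_{r-1}[v_{0},\ldots,v_{r-1}]),
\end{equation*}
where the $v_{k}$ are built from $(f,h)$ by the recurrent formula~\eqref{8f69}. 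Routine anisotropic-Slobodetskii trace estimates in the spatial and temporal variables, together with multiplier estimates for the $C^{\infty}$-coefficients of $A$ and, when $l=1$, of $B$, show that $(f,h)\mapsto c_{l}(f,h)$ is a bounded linear map from $H^{\sigma-2,\sigma/2-1}(\Omega)\oplus H^{\sigma-1}(G)$ into $\bigoplus_{k=0}^{r-1}H^{\sigma-(2l+3)/2-2k}(\Gamma)$ for each $\sigma\in\{s_{0},s_{1}\}$; this is precisely where the inclusion $\sigma\in J_{l,r}$ is indispensable. Letting $R$ and $T$ be the operators of Lemma~\ref{8lem1} applied with the value $\sigma-(2l+1)/2>2r-1$, set
\begin{equation*}
P(f,g,h):=\bigl(f,\;g-T\bigl(Rg-c_{l}(f,h)\bigr),\;h\bigr).
\end{equation*}
The identity $RTv=v$ yields $R\bigl(g-T(Rg-c_{l}(f,h))\bigr)=c_{l}(f,h)$, so $P$ lands in $\mathcal{Q}_{l}^{s_{j}-2,s_{j}/2-1}$ and acts as the identity on this subspace; the boundedness of $R$, $T$ and $c_{l}$ makes $P$ bounded on each endpoint space, as required by Proposition~\ref{8prop1}.

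Applying Propositions~\ref{8prop2}, \ref{8prop4} and~\ref{8prop5} summand by summand to the definition of $\mathcal{H}_{l}^{\sigma-2,\sigma/2-1}$ identifies $X_{\psi}$ with $\mathcal{H}_{l}^{s-2,s/2-1;\varphi}$ up to equivalence of norms; the restriction $\lambda\leq s_{0}$ of Proposition~\ref{8prop5} is met because the largest value we subtract is $2<s_{0}$. Proposition~\ref{8prop1} then gives
\begin{equation*}
\bigl[\mathcal{Q}_{l}^{s_{0}-2,s_{0}/2-1},\mathcal{Q}_{l}^{s_{1}-2,s_{1}/2-1}\bigr]_{\psi}=\mathcal{H}_{l}^{s-2,s/2-1;\varphi}\cap\mathcal{Q}_{l}^{s_{0}-2,s_{0}/2-1}
\end{equation*}
with equivalence of norms. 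The right-hand intersection coincides with $\mathcal{Q}_{l}^{s-2,s/2-1;\varphi}$, because the compatibility conditions are literally the same $r$ identities at every level in $J_{l,r}$ and each of them is meaningful for any triple in $\mathcal{H}_{l}^{s-2,s/2-1;\varphi}$ by the embedding \eqref{8f69a} or~\eqref{8f69N}. This yields~\eqref{8f28}.

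The main technical obstacle is the uniform boundedness of the compatibility map $c_{l}$ at the coarsest endpoint $s_{0}$: one must verify that every anisotropic trace $\partial_{t}^{k-1}f(\cdot,0)\!\upharpoonright\!\Gamma$ and each spatial trace of $D_{x}^{\alpha}v_{q}$ on $\Gamma$ appearing in \eqref{8f69} (and in the $B_{k}$ expressions for $l=1$) is well defined as a bounded linear map into the correct Sobolev space on $\Gamma$. These mappings are just on the edge of being well posed when $s_{0}$ approaches the left endpoint of $J_{l,r}$, which is exactly why the restriction to a single interval $J_{l,r}$ cannot be dispensed with.
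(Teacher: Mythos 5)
Your proposal is correct and follows essentially the same route as the paper's proof: interpolate the full direct sums $\mathcal{H}_{l}^{s_j-2,s_j/2-1}$ via Propositions~\ref{8prop2}, \ref{8prop4}, \ref{8prop5}, and then pass to the subspaces $\mathcal{Q}_{l}^{s_j-2,s_j/2-1}$ through Proposition~\ref{8prop1} with the projector $P(f,g,h)=(f,\,g+T(c_{l}(f,h)-Rg),\,h)$ built from Lemma~\ref{8lem1}, which is exactly the projector used in the paper (with the same shift $s:=s_j-(2l+1)/2$ and the same role of the hypothesis $s_0,s,s_1\in J_{l,r}$).
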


\begin{proof}
Recall that $\mathcal{Q}_{l}^{s-2,s/2-1;\varphi}$ and $\mathcal{Q}_{l}^{s_j-2,s_{j}/2-1}$, with $j\in\{0,\,1\}$, are subspaces of the Hilbert spaces $\mathcal{H}_{l}^{s-2,s/2-1;\varphi}$ and $\mathcal{H}_{l}^{s_j-2,s_{j}/2-1}$ respectively. According to Propositions \ref{8prop2}, \ref{8prop4}, and \ref{8prop5}
we obtain the following:
\begin{align*}
\bigl[&\mathcal{H}_l^{s_0-2,s_0/2-1},
\mathcal{H}_l^{s_1-2,s_1/2-1}\bigr]_{\psi} \\
&=\bigl[H^{s_0-2,s_0/2-1}(\Omega)\oplus
H^{s_0-(2l+1)/2,s_0/2-(2l+1)/4}(S)
\oplus H^{s_0-1}(G), \\
&\quad\;\;\,H^{s_1-2,s_1/2-1}(\Omega)\oplus
H^{s_1-(2l+1)/2,s_1/2-(2l+1)/4}(S)
\oplus H^{s_1-1}(G)\bigr]_{\psi}\\
&=\bigl[H^{s_0-2,s_0/2-1}(\Omega),
H^{s_1-2,s_1/2-1}(\Omega)\bigr]_{\psi}\\
&\qquad\oplus
\bigl[H^{s_0-(2l+1)/2,s_0/2-(2l+1)/4}(S),
H^{s_1-(2l+1)/2,s_1/2-(2l+1)/4}(S)\bigr]_{\psi}\\
&\qquad
\oplus\bigl[H^{s_0-1}(G),H^{s_1-1}(G)\bigr]_{\psi}\\
&=
H^{s-2,s/2-1;\varphi}(\Omega)\oplus
H^{s-(2l+1)/2,s/2-(2l+1)/4;\varphi}(S)
\oplus H^{s-1;\varphi}(G)
=\mathcal{H}_l^{s-2,s/2-1;\varphi}.
\end{align*}
Thus,
\begin{equation}\label{8f30}
\bigl[\mathcal{H}_l^{s_0-2,s_0/2-1},
\mathcal{H}_l^{s_1-2,s_1/2-1}\bigr]_{\psi}=
\mathcal{H}_l^{s-2,s/2-1;\varphi}
\end{equation}
up to equivalence of norms.

We will deduce the required formula \eqref{8f28} from \eqref{8f30} with the help of Proposition~\ref{8prop1}. To this end, we need to present a linear mapping $P$ on $\mathcal{H}_l^{s_0-2,s_0/2-1}$ such that $P$ is a projector of the space $\mathcal{H}_l^{s_j-2,s_j/2-1}$ onto its subspace $\mathcal{Q}_l^{s_j-2,s_j/2-1}$ for each $j\in\{0,\,1\}$. If we have this mapping, we will get
\begin{align*}
\bigl[\mathcal{Q}_{l}^{s_0-2,s_{0}/2-1},
\mathcal{Q}_{l}^{s_1-2,s_{1}/2-1}\bigr]_{\psi}
&=\bigl[\mathcal{H}_l^{s_0-2,s_0/2-1},
\mathcal{H}_l^{s_1-2,s_1/2-1}\bigr]_{\psi}\cap
\mathcal{Q}_{l}^{s_0-2,s_{0}/2-1}\\
&=\mathcal{H}_l^{s-2,s/2-1;\varphi}\cap
\mathcal{Q}_{l}^{s_0-2,s_{0}/2-1}\\
&=\mathcal{Q}_{l}^{s-2,s/2-1;\varphi}
\end{align*}
due to Proposition~\ref{8prop1}, formula \eqref{8f30}, and the conditions $s_0,s\in J_{l,r}$ and $s_0<s$. Note that these conditions imply the last equality because the elements of the spaces $\mathcal{Q}_{l}^{s_0-2,s_{0}/2-1}$
and $\mathcal{Q}_{l}^{s-2,s/2-1;\varphi}$ satisfy the same
compatibility conditions and because $\mathcal{H}_l^{s-2,s/2-1;\varphi}$ is embedded continuously in $\mathcal{H}_l^{s_{0}-2,s_{0}/2-1}$.

We will build the above-mentioned mapping $P$ with the help of Lemma~\ref{8lem1}. Consider first the case of $l=0$. Given $(f,g,h)\in\mathcal{H}_0^{s_{0}-2,s_{0}/2-1}$, we put
\begin{equation*}
g^*:=g+T\bigl(v_{0}\!\upharpoonright\!\Gamma-g\!\upharpoonright\!\Gamma,
\dots,v_{r-1}\!\upharpoonright\!\Gamma-\partial^{\,r-1}_t g\!\upharpoonright\!\Gamma\bigr).
\end{equation*}
Here, the functions $v_k\in H^{s_{0}-1-2k}(G)$, with $k=0,\ldots,r-1$, are defined by the recurrent formula \eqref{8f69}, and the mapping $T$ is taken from Lemma~\ref{8lem1}. The linear mapping
$P:(f,g,h)\mapsto (f,g^*,h)$ defined on all vectors $(f,g,h)\in\mathcal{H}_0^{s_{0}-2,s_{0}/2-1}$ is required. Indeed, its
restriction to each space $\mathcal{H}_0^{s_j-2,s_j/2-1}$, with $j\in\{0,\,1\}$, is a bounded operator on this space. This follows directly from Lemma~\ref{8lem1} in which we take $s:=s_j-1/2$.
Moreover, if $(f,g,h)\in\mathcal{Q}_0^{s_j-2,s_j/2-1}$, then
$P(f,g,h)=(f,g,h)$ due to the compatibility conditions \eqref{8f10}.

Consider now the case of $l=1$. Given $(f,g,h)\in\mathcal{H}_1^{s_{0}-2,s_{0}/2-1}$, we put
\begin{equation*}
g^*:=g+T\bigl(B_0[v_{0}]\!\upharpoonright\!\Gamma-g\!\upharpoonright\!\Gamma,\dots,
B_{r-1}[v_{0},\dots,v_{r-1}]\!\upharpoonright\!\Gamma-\partial^{r-1}_t g\!\upharpoonright\!\Gamma\bigr).
\end{equation*}
Here, the functions $v_0,\ldots,v_{r-1}$ and mapping $T$ are the same as in the $l=0$ case. The linear mapping
$P:(f,g,h)\mapsto (f,g^*,h)$ defined on all vectors $(f,g,h)\in\mathcal{H}_1^{s_{0}-2,s_{0}/2-1}$ is required. Indeed, its restriction to each space $\mathcal{H}_1^{s_j-2,s_j/2-1}$, with $j\in\{0,\,1\}$, is a bounded operator on this space due to Lemma~\ref{8lem1} in which $s:=s_j-3/2$. Moreover, if $(f,g,h)\in\mathcal{Q}_1^{s_j-2,s_j/2-1}$, then $P(f,g,h)=(f,g,h)$ by the compatibility conditions \eqref{8f14}.
\end{proof}

\begin{remark}\label{8rem1}
If $l=1$ and $r=0$, then the conclusion of Lemma \ref{8lem2} remains true. Indeed, in this case $\mathcal{Q}_{1}^{s-2,s/2-1;\varphi}=\mathcal{H}_{1}^{s-2,s/2-1;\varphi}$ and $\mathcal{Q}_{1}^{s_j-2,s_{j}/2-1}=\mathcal{H}_{1}^{s_j-2,s_{j}/2-1}$ for each $j\in\{0,\,1\}$ so that
\eqref{8f28} coincides with the equality \eqref{8f30}. The latter is valid in the case considered as well.
\end{remark}

Now we are in position to prove the main results of the paper.

\begin{proof}[Proofs of Theorems \ref{8th1} and \ref{8th2}]
Let $s>2$, $\varphi\in\mathcal{M}$, and $l\in\{0,\,1\}$. If $l=0$ [or $l=1$], then our reasoning relates to Theorem \ref{8th1} [or Theorem \ref{8th2}]. We first consider the case where $s\notin E_{l}$. Then $s\in J_{l,r}$ for a certain integer $r$. Choose numbers $s_0,s_1\in J_{l,r}$ such that $s_0<s<s_1$. According to Lions and Magenes \cite[Theorem~6.2]{LionsMagenes72ii}, the mapping
\begin{equation}\label{8fmap-smooth}
u\mapsto\Lambda_{l}u,\quad\mbox{with}\quad u\in C^{\infty}(\overline{\Omega}),
\end{equation}
extends uniquely (by continuity) to an isomorphism
\begin{equation}\label{8f35}
\Lambda_l:H^{s_j,s_j/2}(\Omega)\leftrightarrow
\mathcal{Q}_{l}^{s_j-2,s_j/2-1}
\quad\mbox{for each}\quad j\in\{0,1\}.
\end{equation}
Let $\psi$ be the interpolation parameter from Proposition~\ref{8prop4}. Then the restriction of the operator
\eqref{8f35} with $j=0$ to the space
\begin{equation*}
\bigl[H^{s_0,s_{0}/2}(\Omega),
H^{s_1,s_{1}/2}(\Omega)\bigr]_{\psi}=
H^{s,s/2;\varphi}(\Omega)
\end{equation*}
is an isomorphism
\begin{equation}\label{8f36}
\Lambda_l:H^{s,s/2;\varphi}(\Omega)\leftrightarrow
\bigl[\mathcal{Q}_l^{s_0-2,s_0/2-1},
\mathcal{Q}_l^{s_1-2,s_1/2-1}\bigr]_{\psi}=\mathcal{Q}_{l}^{s-2,s/2-1;\varphi}.
\end{equation}
Here, the equalities of spaces hold true up to equivalence of norms due to Proposition~\ref{8prop5} and Lemma~\ref{8lem2} (see also Remark~\ref{8rem1}). The operator \eqref{8f36} is an extension by continuity
of the mapping \eqref{8fmap-smooth} because $C^{\infty}(\overline{\Omega})$
is dense in $H^{s,s/2;\varphi}(\Omega)$. Thus, Theorems \ref{8th1} and \ref{8th2} are proved in the case considered.

Consider now the case where $s\in E_{l}$. Choose $\varepsilon\in(0,1/2)$ arbitrarily. Since $s\pm\varepsilon\notin E_{l}$ and $s-\varepsilon>2$, we have the isomorphisms
\begin{equation*}
\Lambda_l:H^{s\pm\varepsilon,(s\pm\varepsilon)/2;\varphi}(\Omega)\leftrightarrow
\mathcal{Q}_l^{s\pm\varepsilon-2,(s\pm\varepsilon)/2-1;\varphi}.
\end{equation*}
They imply that the mapping \eqref{8fmap-smooth} extends uniquely (by continuity) to an isomorphism
\begin{equation*}
\begin{aligned}
\Lambda_l:&
\bigl[H^{s-\varepsilon,(s-\varepsilon)/2;\varphi}(\Omega),
H^{s+\varepsilon,(s+\varepsilon)/2;\varphi}(\Omega)\bigr]_{1/2}\\
&\leftrightarrow
\bigl[\mathcal{Q}_l^{s-\varepsilon-2,(s-\varepsilon)/2-1;\varphi},
\mathcal{Q}_l^{s+\varepsilon-2,(s+\varepsilon)/2-1;\varphi}\bigr]_{1/2}=
\mathcal{Q}_l^{s-2,s/2-1;\varphi}.
\end{aligned}
\end{equation*}
Recall that the last equality is the definition of the space $\mathcal{Q}_l^{s-2,s/2-1;\varphi}$.

It remains to prove that
\begin{equation}\label{8f73}
H^{s,s/2;\varphi}(\Omega)=
\bigl[H^{s-\varepsilon,(s-\varepsilon)/2;\varphi}(\Omega),
H^{s+\varepsilon,(s+\varepsilon)/2;\varphi}(\Omega)\bigr]_{1/2}
\end{equation}
up to equivalence of norms. We reduce the interpolation of H\"ormander spaces to an interpolation of Sobolev spaces with the help of Proposition~\ref{8prop3}. Let us choose real $\delta>0$ such that $s-\varepsilon-\delta>0$.
According to Proposition~\ref{8prop5} we have the equalities
\begin{equation*}
H^{s-\varepsilon,(s-\varepsilon)/2;\varphi}(\Omega)=
\bigl[H^{s-\varepsilon-\delta,(s-\varepsilon-\delta)/2}(\Omega),
H^{s+\varepsilon+\delta,(s+\varepsilon+\delta)/2}(\Omega)\bigr]_{\alpha}
\end{equation*}
and
\begin{equation*}
H^{s+\varepsilon,(s+\varepsilon)/2;\varphi}(\Omega)=
\bigl[H^{s-\varepsilon-\delta,(s-\varepsilon-\delta)/2}(\Omega),
H^{s+\varepsilon+\delta,(s+\varepsilon+\delta)/2}(\Omega)\bigr]_{\beta}.
\end{equation*}
Here, the interpolation parameters $\alpha$ and $\beta$
are defined by the formulas
\begin{equation*}
\alpha(r):=r^{\delta/(2\varepsilon+2\delta)}\varphi(r^{1/(2\varepsilon+2\delta)}),
\quad
\beta(r):=r^{(2\varepsilon+\delta)/(2\varepsilon+2\delta)}\varphi(r^{1/(2\varepsilon+2\delta)})
\quad\mbox{if}\quad r\geq1
\end{equation*}
and $\alpha(r)=\beta(r):=1$ if $0<r<1$. Therefore, owing to Propositions \ref{8prop3} and \ref{8prop5}, we get
\begin{align*}
\bigl[&H^{s-\varepsilon,(s-\varepsilon)/2;\varphi}(\Omega),
H^{s+\varepsilon,(s+\varepsilon)/2;\varphi}(\Omega)\bigr]_{1/2}\\
\notag
&=\Bigl[
\bigl[H^{s-\varepsilon-\delta,(s-\varepsilon-\delta)/2}(\Omega),
H^{s+\varepsilon+\delta,(s+\varepsilon+\delta)/2}(\Omega)\bigr]_{\alpha},\\
&\quad\quad\bigl[H^{s-\varepsilon-\delta,(s-\varepsilon-\delta)/2}(\Omega),
H^{s+\varepsilon+\delta,(s+\varepsilon+\delta)/2}(\Omega)\bigr]_{\beta}
\Bigr]_{1/2}\\ \label{8f74}
&=\bigl[H^{s-\varepsilon-\delta,(s-\varepsilon-\delta)/2}(\Omega),
H^{s+\varepsilon+\delta,(s+\varepsilon+\delta)/2}(\Omega)\bigr]_{\omega}=H^{s,s/2;\varphi}(\Omega).
\end{align*}
Here, the interpolation parameter $\omega$ is defined by the formulas
\begin{equation*}
\omega(r):=\alpha(r)(\beta(r)/\alpha(r))^{1/2}=r^{1/2}\varphi(r^{1/(2\varepsilon+2\delta)})
\quad\mbox{if}\quad r\geq1
\end{equation*}
and $\omega(r):=1$ if $0<r<1$. Thus, \eqref{8f73} is valid.
\end{proof}

\begin{remark}\label{8rem2}
The spaces defined by formulas \eqref{8f71} and \eqref{8f72} are
independent of the choice of the number $\varepsilon\in(0,1/2)$ up to equivalence of norms. Indeed, let
$l\in\{0,\,1\}$, $s\in E_{l}$; then according to Theorems \ref{8th1} and \ref{8th2} we have the isomorphisms
\begin{equation*}
\Lambda_l:H^{s,s/2;\varphi}(\Omega)\leftrightarrow
\bigl[\mathcal{Q}_{l}^{s-2-\varepsilon,s/2-1-\varepsilon/2;\varphi},
\mathcal{Q}_{l}^{s-2+\varepsilon,s/2-1+\varepsilon/2;\varphi}
\bigr]_{1/2}.
\end{equation*}
whenever $0<\varepsilon<1/2$. This means the required independence.
\end{remark}

\end{document}